\numberwithin{equation}{section}
\newtheorem{theorem}{Theorem}[section]
\newtheorem*{axiom}{Axiom}
\newtheorem{lemma}[theorem]{Lemma}
\newtheorem{proposition}[theorem]{Proposition}
\newtheorem{remark}[theorem]{Remark}
\newenvironment{proofSTEPS}{\removelastskip\par\medskip   
\noindent{\em Proof via `big steps'} \rm}{\penalty-20\null\hfill$\square$\par\medbreak}
\newenvironment{proofACDC}{\removelastskip\par\medskip   
\noindent{\em Proof via `small steps'} \rm}{\penalty-20\null\hfill$\square$\par\medbreak}
\newcommand{\N}{\mathbb{N}}
\newcommand{\Q}{\mathbb{Q}}
\newcommand{\R}{\mathbb{R}}
\newcommand{\restr}[1]{\lower3pt\hbox{$|_{#1}$}}
\newcommand{\X}{{\rm X}}
\newcommand{\sfd}{{\sf d}}
\renewcommand{\d}{{\rm d}}
\newcommand{\nchi}{{\raise.3ex\hbox{$\chi$}}}
\newcommand{\fr}{\penalty-20\null\hfill$\blacksquare$}         
\newcommand{\WO}{{\sf WO}}
\newcommand{\fs}[1]{{\footnotesize #1}}            
\title{Some examples of use of transfinite induction in analysis}
\begin{document}

\author{Nicola Gigli\ \thanks{SISSA, ngigli@sissa.it}  }

\maketitle	
	
\begin{abstract}

It is not uncommon in analysis that   existence of  extremal objects is obtained via an iterative procedure: we start from a given admissible object, then modify it, then  modify again etc... If being extremal means maximimizing a real valued quantity and we are sure to approach the supremum fast enough, after a countable number of steps and a limiting procedure we are done. 

In this short note we want to advertise a slightly different line of thought, where rather than trying to approach the supremum fast enough, we: try to increase, if possible, the function to be maximized and, at the same time, index our recursive procedure over ordinals. Since there are no increasing functions from $\omega_1$ to $\R$, the procedure must stop at some countable ordinal and existence is proved anyway.

The advantage of this line of reasoning is that it can be helpful even in situations where it is not so evident how to measure `being maximal' via a real valued function. This is the case, for instance, for existence of a Maximal Globally Hyperbolic Development of an initial data set in General Relativity.

Speaking of this particular example, we also show that such `real-valued quantification' of the size of a development is actually possible,  thus existence of a maximal one can be obtained in a countable number of steps using the  original argument in  \cite{CBGer69} together with the standard procedure depicted above. This  provides a  way alternative to the one given in \cite{SbDezorn16} to  `dezornify' the proof in  \cite{CBGer69}.

\end{abstract}

\tableofcontents

\section{Introduction}
In many situations in analysis we want to prove existence of a certain extremal object, but perhaps such  existence  is not immediately evident, and we must proceed by approximation. If being extremal means maximizing a real valued function, then a typical procedure would be:  start from a given admissible object, then modify it by trying to approach the supremum, then modify it again etc... If we are sure to approach the supremum fast enough, after a countable number of steps and a limiting procedure we are done. We shall informally call this `proof via big steps'.

In this paper we want to advertise a different kind of argument, that somehow replaces these `big steps' with possibly `small ones'. It relies on the fact that any monotone map from the first uncountable ordinal to the reals must be eventually constant (we recall the proof of this basic fact in Proposition \ref{prop:omega1r}), thus if we index our iterative procedure over the ordinals and at every step we try to augment our given real valued function, then regardless of how big our steps are, we are sure that after a countable number of steps we are done anyway.

We shall illustrate this principle with three practical applications, presented in a somehow increasing order of difficulty.

There is a clear downside in our approach, namely the fact that it relies on concepts such as ordinal numbers and transfinite recursion, that are rarely found in analysis' papers. The upside, though, is that it might work even in situations where it is not really clear `how to quantify progresses' in our construction. This is the case, in particular , for existence of a the Maximal Global Hyperbolic Development of an initial data set, and in this sense it is no surprise that the original proof in \cite{CBGer69} relied on the Axiom of Choice. In this setting, the key observation that allows to reduce the number of Choices to a countable quantity is the statement by Geroch \cite{Ger68}:
\begin{equation}
\label{eq:ger}
\text{A smooth connected manifold carrying a non-degenerate metric tensor is separable}
\end{equation}
whose proof we shall report in Proposition \ref{prop:geroch}. We think at this statement as: a non-degenerate metric tensor gives, regardless its signature, a `scale measured by reals' to the given manifold, and once such a scale is present, the object must be separable.

With all this being said, we shall also provide an alternative proof of the existence of a Maximal Globally Hyperbolic Development that has nothing to do with transfinite recursion and ordinals. Rather, we construct a way of quantifying the size of a development, see \eqref{eq:quantMGHD}, so that a maximal one can be built via `big steps', along classical lines. Still, the fact that this quantification is not-so-evidently found is perhaps an argument in favour of the relevance of the line of reasoning discussed here in the mathematical conversation.

\bigskip

In conclusion, the main takeaways of this paper are:
\begin{enumerate}
\item The illustration of the potential use of transfinite induction/recursion over countable ordinals to deduce `honest' results in geometric analysis. The advantage, if any, over more standard arguments is that it permits to pay less attention to the choices made during the induction procedure. We stress that this line of thought is based over ${\sf DC}_{\omega_1}$ (i.e.\ Dependent Choice indexed over the first uncountable ordinal $\omega_1$) that in turn is still a Choice strong enough to imply, for instance, existence of non-measurable subsets of the reals.
\item A  modification of the original proof of existence of Maximal Global Hyperbolic Developments, based on the quantification \eqref{eq:quantMGHD}. This has nothing to do with ordinals and transfinite induction and because of this might be of some interest for the mathematical GR community.

The use of the quantification \eqref{eq:quantMGHD} eliminates the need of Zorn's lemma replacing it with the more commonly used Countable Dependent Choice. In this sense this part of the work provides an alternative way to `dezornify' the original proof in \cite{CBGer69}, comparable to that given in \cite{SbDezorn16}. We remark that in \cite{WWWY13} a further different proof has been given, that relies on no Choice at all for what concerns the construction part of MGHD. Our proof, much like several other proofs by iteration, does not allow to fully drop Choice and we shall often use the usual Countable Dependent Choice throughout the text. We do not regard this as a weakness of our approach, as we believe anyway -- in line with \cite{BernaysCDC} --  that CDC is crucially needed in most of the aspects of (geometric) analysis. 
\end{enumerate}

\bigskip

I thought for the first time at a `small steps' argument via transfinite recursion of the kind just discussed in my PhD thesis \cite[Proposition 5.10]{GigliThesis}. I mentioned this to a number of colleagues and to all of them the argument seemed new; still, I cannot exclude that in circles far from mine this is a well-known line of reasoning. In \cite{GigliThesis} I needed a statement closely resembling Ekeland's variational principle, of which I was unaware, and admittedly I did not find a way to make a proper quantification, thus I resorted on this other approach. Later, any time I was tempted to use this  line of thought, a quantification was easily found, making the small-steps argument irrelevant. I thus never published anything on the matter, limiting myself to asking questions on the matter on MathOverflow \cite{ACDCMO}, especially in connection to the kind of Axiom of Choice needed to carry out the argument.

I realized only recently the relevance of this line of thought in relation to Maximal Global Hyperbolic Developments, and stimulated by the above considerations looked for  a way to measure the size of developments via a real valued function.

\bigskip

{\bf Acknowledgment} I wish to warmly thank Prof. A. Karagila for a series of conversations that helped me understanding the equivalence of  ${\rm DC}_{\omega_1}$  and Lemma \ref{le:ACDC}.

This study was funded by the European Union - NextGenerationEU, in the framework of the PRIN Project Contemporary perspectives on geometry and gravity (code 2022JJ8KER – CUP G53D23001810006). The views and opinions expressed are solely those of the author and do not necessarily reflect those of the European Union, nor can the European Union be held responsible for them.

I also want to thank the referees for the careful reading of the manuscript and their useful suggestions,  Prof.\ A. Lerario for having brought to my attention the Pr\"ufer surface, relevant in Remark \ref{re:longest}, and also Prof.\ Y.\ Shlapentokh-Rothman for stimulating conversations on the matter.

\section{Abstract presentation of the mechanism}\label{se:axiom}
Here we present the abstract mechanism behind our use of transfinite induction. We assume the reader familiar with the basic theory of ordinals, as can be found for instance in \cite{Jech2003}. An informal, but rigorous, presentation of all the necessary ingredients is also given in the appendix.

We shall use greek letters $\alpha,\beta,\gamma\ldots$ to denote ordinals. $\omega_1$ is the first uncountable ordinal. In the discussion below a `sequence' in a set $A$ is a map from some ordinal to $A$. It will be either denoted by  $(a_{\alpha'})_{\alpha'<\alpha}$ (or $(a_{\alpha'})_{\alpha'\leq \alpha}$ if it is also defined at $\alpha$) or by $f:\alpha\to A$ (according to the fact that an ordinal is the set of the smaller ordinals). In this latter case, for $\beta<\alpha$ the restriction of $f$ to $\beta$ (i.e.\ the first $\beta$ terms in the sequence) is denoted $f\restr\beta$. If the domain of a sequence is the ordinal $\alpha$, we say that it has length $\alpha$, or that it is an $\alpha$-sequence.

\bigskip

The technique we want to discuss is for proving existence results. So we are given a non-empty set $A$ and a subset $F\subset A$ of `final' elements that we intend to show to be non-empty. The way we have to probing that $F$ is not empty is via a set  $\mathcal S$ of sequences in $A$ that, roughly said, have the property that `if a sequence in $\mathcal S$ does not meet $F$, then it can be extended'. If this occurs, then indeed at least one sequence in $\mathcal S$ must meet $F$, which therefore is not empty. Even more: if we have an upper bound on the length of sequences in $\mathcal S$ (in our case this is $\omega_1$), then $F$ must be met before reaching such upper bound. 

We emphasize that for this argument to work it is imperative to work with sequences indexed by ordinals, as easy examples show that ordinary sequences cannot lead to the same conclusion.
\begin{lemma}\label{le:ACDC} Let  $A$  be a set, $F\subset A$ a subset and $\mathcal S$ a non-empty collection of sequences in $A$ of length $<\omega_1$ with the following properties:
\begin{itemize}
\item[i)] {\rm (if $F$ has not been reached, then we can continue)}\\  If $(a_{\alpha'})_{\alpha'<\alpha}\in\mathcal S$ is so that $a_{\alpha'}\notin F$ for every $\alpha'<\alpha$, then there is $a_\alpha\in A$ such that  $(a_{\alpha'})_{\alpha'\leq\alpha}\in\mathcal S$;
 
\item[ii)] {\rm (limits of admissible sequences are admissible)} \\
 If $\alpha<\omega_1$ is a limit ordinal and $(a_{\alpha'})_{\alpha'<\alpha}$ is so that $(a_{\alpha''})_{\alpha''<\alpha'}\in\mathcal S$ for every $\alpha'<\alpha$, then  $(a_{\alpha'})_{\alpha'<\alpha}\in\mathcal S$;
 
\item[iii)] {\rm (there is no admissible sequence $\omega_1$-long)}\\
 There is no $(a_\alpha)_{\alpha<\omega_1}$ such that for every $\alpha<\omega_1$ we have $(a_{\alpha'})_{\alpha'<\alpha}\in\mathcal S$.
\end{itemize}
Then there is $(a_{\alpha'})_{\alpha'<\alpha}\in\mathcal S$ and $\bar\alpha<\alpha$   so that    $a_{\bar\alpha}\in F$. In particular, $F$ is not empty.
\end{lemma}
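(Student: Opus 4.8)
The plan is to build a maximal admissible sequence by transfinite recursion and then invoke hypothesis iii) to force termination. First I would argue by contradiction: suppose that no admissible sequence ever reaches $F$, i.e. for every $(a_{\alpha'})_{\alpha'<\alpha}\in\mathcal S$ and every $\bar\alpha<\alpha$ we have $a_{\bar\alpha}\notin F$. Under this assumption I want to construct, by transfinite recursion on $\alpha<\omega_1$, a single sequence $(a_\alpha)_{\alpha<\omega_1}$ such that $(a_{\alpha'})_{\alpha'<\alpha}\in\mathcal S$ for every $\alpha<\omega_1$; this directly contradicts iii), proving the claim. The final sentence ``$F$ is not empty'' is then immediate, since the sequence produced contains an element of $F$.

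The recursion has the usual three cases. At stage $0$ we use that $\mathcal S$ is non-empty together with i) applied to the empty sequence (vacuously, no term lies in $F$) to get $a_0$ with $(a_0)\in\mathcal S$ — or more simply we take any element of $\mathcal S$ and note it must have a first term unless it is the empty sequence, in which case i) supplies one. At a successor stage $\alpha=\beta+1$: assuming $(a_{\alpha'})_{\alpha'<\alpha}\in\mathcal S$ has been constructed, by the contradiction hypothesis every $a_{\alpha'}$ with $\alpha'<\alpha$ satisfies $a_{\alpha'}\notin F$, so hypothesis i) yields $a_\alpha\in A$ with $(a_{\alpha'})_{\alpha'\le\alpha}\in\mathcal S$. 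At a limit stage $\alpha$: the sequence $(a_{\alpha'})_{\alpha'<\alpha}$ assembled from the earlier choices has all its proper initial segments in $\mathcal S$ by the inductive construction, so hypothesis ii) gives $(a_{\alpha'})_{\alpha'<\alpha}\in\mathcal S$, and then we continue as in the successor case.

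The one genuinely delicate point — and the step I expect to be the main obstacle — is that this is a recursion of uncountable length requiring a choice of $a_\alpha$ at each of $\omega_1$-many successor stages, so carrying it out needs a suitable transfinite form of the Axiom of Choice (dependent choice along $\omega_1$, i.e. ${\rm DC}_{\omega_1}$, as the introduction and acknowledgments indicate). Concretely, hypothesis i) only asserts \emph{existence} of a continuation, not a canonical one, so to splice the stagewise choices into one coherent $\omega_1$-sequence I would fix in advance a choice function selecting, for each admissible sequence $s$ whose terms avoid $F$, a witness $a(s)\in A$ with $s^\frown a(s)\in\mathcal S$; then the recursion theorem (in the transfinite, relative to ${\rm DC}_{\omega_1}$) produces the desired $(a_\alpha)_{\alpha<\omega_1}$ canonically from $a(\cdot)$, with the limit stages handled by ii) as above. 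Everything else is bookkeeping: checking that at each stage the hypotheses of i) (no term in $F$) and ii) (all initial segments admissible) are indeed met, which is exactly where the contradiction hypothesis is used.
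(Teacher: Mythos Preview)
Your proposal is correct and follows essentially the same approach as the paper: build a transfinite sequence by recursion, using (i) at successor stages and (ii) at limit stages, and invoke (iii) to force termination before $\omega_1$. The only cosmetic difference is that you phrase it as a proof by contradiction while the paper presents it as a direct dichotomy (either we reach $F$ at some countable stage or we produce an $\omega_1$-long admissible sequence, the latter excluded by (iii)); your discussion of the base case and of the need for ${\rm DC}_{\omega_1}$ is actually slightly more careful than the paper's own proof.
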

\begin{proof}
Define recursively a transfinite sequence as follows. Let $(a_{\alpha'})_{\alpha'<\alpha_0}\in \mathcal S$ be arbitrary. If this satisfies the conclusion we are done. Otherwise we look for extensions in $\mathcal S$ of this sequence.  Then let  $\alpha<\omega_1$ be with $\alpha_0<\alpha$ and assume to have already  defined $a_{\alpha'}$ for every $\alpha_0\leq{\alpha'<\alpha}$ and that  $(a_{\alpha''})_{\alpha''\leq \alpha'}\in\mathcal S$ for every $\alpha'<\alpha$. Then $(a_{\alpha'})_{\alpha'<\alpha}\in\mathcal S$: this is obvious if $\alpha$ is a successor, while if it is limit we use   $(ii)$. If for some $\alpha'<\alpha$ we have $a_{\alpha'}\in F$ we are done, otherwise  by $(i)$ there is $a_\alpha\in A$ such that  $(a_{\alpha'})_{\alpha'\leq\alpha}\in\mathcal S$.

Proceed by transfinite recursion until either we find an $\omega_1$-long sequence  $(a_\alpha)_{\alpha<\omega_1}$ such that for every $\alpha<\omega_1$ we have $(a_{\alpha'})_{\alpha'<\alpha}\in\mathcal S$ or we find some $\alpha<\omega_1$ and  $\bar\alpha<\alpha$  such that $(a_{\alpha'})_{\alpha'<\alpha}\in\mathcal S$    so that    $a_{\bar\alpha}\in F$. Assumption $(iii)$ tells that the former case does not happen, thus the latter holds.
\end{proof}

We might  replace assumption $(iii)$ above with the assumption that sequences in $\mathcal S$ are injective, as this is what happens in the applications we have in mind. However, for us it might be relevant to know that the recursive procedure only involves countable ordinals (e.g.\ in discussing the application to the Hahn-Jordan decomposition we need to be sure that the sets we produce at each step are measurable) and for this assumption $(iii)$ matters. A further advantage of having it is that it clarifies which version of Axiom of Choice is needed to carry on the argument: as Prof. A. Karagila patiently explained me, Lemma \ref{le:ACDC} is equivalent to  ${\rm DC}_{\omega_1}$, i.e.\ Dependent Choice indexed over $\omega_1$ (we refer to the comprehensive monograph \cite{JechChoice}, and in particular Chapter 8 in there, for more on the matter. We illustrate this by first recalling what ${\rm DC}_{\omega_1}$ says. Recall that given a set $A$, by $A^{<\omega_1}$ it is meant the collection of $\alpha$-sequences in $A$ for $\alpha<\omega_1$.
\begin{axiom}[${\sf DC}_{\omega_1}$] Let $A$  be a non-empty set  and $G$ a map from $A^{<\omega_1}$ to non-empty subsets of $A$. Then there exists an $\omega_1$-sequence $f:\omega_1\to A$ such that 
\begin{equation}
\label{eq:dc1}
f(\alpha)\in G(f\restr\alpha)
\end{equation}
holds for every  $\alpha<\omega_1$.
\end{axiom}
Intuitively, if one wants to build $f$ as in the statement, then she must Choose $f(0)$ among elements of $G(\emptyset)$, then $f(1)$ among the elements of $G(f\restr{\{0\}})$, \dots, then $f(n)$ among the elements of $G(f\restr{\{0,\ldots,n-1\}})$,\dots then $f(\omega)$ among the elements of $G(f\restr{\{0,1,\ldots\}})$ etc... At any given step, the existence of an admissible choice is trivial, as  $G$ takes value in non-empty subsets of $A$. The Axiom ${\sf DC}_{\omega_1}$ ensures that all these subsequent Choices up to $\omega_1$ are actually possible. We have:
\begin{proposition} In Zermelo-Fraekel set theory, Lemma \ref{le:ACDC} is equivalent to Axiom ${\sf DC}_{\omega_1}$. 
\end{proposition}
\begin{proof}\ \\
\emph{Lemma \ref{le:ACDC} implies Axiom ${\sf DC}_{\omega_1}$}. We argue by contradiction. We thus assume that ${\sf DC}_{\omega_1}$ does not hold, i.e.\ that for $A$ and $G$ as in the statement of the axiom there is no $f:\omega_1\to A$ such that  \eqref{eq:dc1} holds for every $\alpha<\omega_1$.

Let $F$ be the empty set and $\mathcal S$ be the collection of sequences $f$ valued in $A$ such that \eqref{eq:dc1} holds for every $\alpha$ smaller than the length of the sequence.  We verify that $A,F,\mathcal S$ verify $(i),(ii),(iii)$ in Lemma \ref{le:ACDC}. Property $(i)$ is obvious from the fact that $G$ takes values in non-empty subsets of $A$, $(ii)$ is obvious by definition and $(iii)$ is our assumption of failure of ${\sf DC}_{\omega_1}$. We thus see that Lemma \ref{le:ACDC} fails, because the data satisfy the assumption but the set $F$ is empty, contradicting the conclusion. 

\emph{Axiom ${\sf DC}_{\omega_1}$ implies Lemma \ref{le:ACDC}}.   Let $A,F,\mathcal S$ be as in the statement of Lemma \ref{le:ACDC}. Let $\mathcal S'$ be the collection of sequences in $\mathcal S$ and of their restriction to ordinals smaller than their original lengths.  Let us pick  one extra element, call it $\bar a$, not belonging to $A$ and let us define $\bar A:=A\cup\{\bar a\}$ and $\bar F:=F\cup\{\bar a\}$. We shall think $\mathcal S,\mathcal S'$ as set of maps from countable ordinals into $\bar A$. Now for $f\in \bar A^{<\omega_1}$ we define $G(f)$ as follows:  if $f$ is in  $\mathcal S'$ and the image does not meet $F$, then $G(f)\subset A\subset\bar A$ is the collection of those values so that if we extend $f$ one further step via this value, the extension is still in $\mathcal S'$ (notice that by definition of $\mathcal S'$ and assumption $(i)$ in Lemma \ref{le:ACDC}, in this case $G(f)$ is not empty).  In all the other cases we put $G(f):=\{\bar a\}$.

The Axiom ${\sf DC}_{\omega_1}$ ensures that there is   $f:\omega_1\to\bar A$ such that \eqref{eq:dc1} holds for every $\alpha<\omega_1$. Suppose that $f(\alpha)\in A\setminus F$ for every $\alpha<\omega_1$. Then we would have $f\restr\alpha\in\mathcal S'$ for every $\alpha<\omega_1$ (as shown via an easy argument by transfinite induction also based on Assumption $(ii)$ - notice that this does not need any Choice at all), contradicting  Assumption $(iii)$ in Lemma \ref{le:ACDC} (and the definition of $\mathcal S'$). Therefore there is a least $\alpha<\omega_1$ so that $f(\alpha)\in F\cup\{\bar a\}$. If there is $\beta<\alpha$ such that $f\restr\beta\notin\mathcal S'$, then $f(\beta)=\bar a$, contradicting the definition of $\alpha$. Hence $f\restr\beta\in\mathcal S'$ for every $\beta<\alpha$. We now distinguish two cases. 

If $\alpha$ is a limit ordinal, then Assumption $(ii)$ tells that $f\restr\alpha\in\mathcal S'$ and since the image of $f\restr\alpha$ does not meet $F$, we see that $G(f\restr\alpha)\subset A$. Hence $f(\alpha)\in A$ and thus $f(\alpha)\in F$. In this case the conclusion of Lemma \ref{le:ACDC} holds.

If instead $\alpha$ is a successor ordinal, say $\alpha=\tilde\alpha+1$, then $f\restr{\tilde\alpha}\in\mathcal S'$ and  the image of $f\restr{\tilde\alpha}$ does not meet $F$, so  $f(\tilde\alpha)\in G(f\restr{\tilde\alpha})$ is chosen, by definition of $G$, in such a way  that $f\restr\alpha\in\mathcal S'$. This and the fact that the image of $f\restr\alpha$ does not meet $F$ (by definition of $\alpha$ and the fact that  the image of  a sequence in $\mathcal S'$ is contained in $A$) implies that $G(f\restr\alpha)\subset A$, proving also in this case that $f(\alpha)\in F$ and thus that the conclusion of Lemma \ref{le:ACDC} holds.
\end{proof}

%
%
%

\section{Three examples}

\subsection{The Hahn-Jordan decomposition}
The Hahn-Jordan decomposition is a basic statement in measure theory telling that a finite signed measure $\mu$ can uniquely be written as difference of two non-negative signed measures concentrated on disjoint subsets. Its proof relies on the following lemma. Recall that given a finite signed measure $\mu$, a measurable set is called `negative' if it has no measurable subset with positive measure.

\begin{lemma}
Let $\mu$ be a finite signed measure on the measurable space $(\X,\mathcal A)$ and $E\in \mathcal A$.

Then there is $E'\subset E$, $E\in\mathcal A$, negative with $\mu(E')\leq\mu(E)$.
\end{lemma}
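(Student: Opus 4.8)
The plan is to deduce this from the abstract Lemma~\ref{le:ACDC}, with the following data. Let $A$ be the collection of measurable subsets of $E$, let $F\subseteq A$ be the subcollection of \emph{negative} sets, and let $\mathcal S$ consist of those sequences $(R_{\alpha'})_{\alpha'<\alpha}$ of elements of $A$ with $R_0=E$, with $R_{\beta+1}=R_\beta\setminus P_\beta$ at each successor stage for some measurable $P_\beta\subseteq R_\beta$ satisfying $\mu(P_\beta)>0$, and with $R_\lambda=\bigcap_{\alpha'<\lambda}R_{\alpha'}$ at each limit stage. The guiding idea is that a set is negative precisely when \emph{no} positive-measure piece can be stripped from it, so we keep removing positive-measure subsets; crucially, each such removal strictly \emph{lowers} the measure of what remains, so the bound $\mu(R_\alpha)\le\mu(E)$ is preserved automatically and the negative set we finally reach will satisfy $\mu(E')\le\mu(E)$ with the correct sign.

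Next I would verify the three hypotheses. For (i), assume $(R_{\alpha'})_{\alpha'<\alpha}\in\mathcal S$ with no $R_{\alpha'}$ negative. If $\alpha=\beta+1$, then $R_\beta\notin F$ means $R_\beta$ is not negative, hence contains a measurable subset of positive measure, and removing it yields an admissible $R_\alpha$. If $\alpha$ is a limit, set $R_\alpha=\bigcap_{\alpha'<\alpha}R_{\alpha'}$, a countable (as $\alpha<\omega_1$) and therefore measurable intersection; this extends the sequence inside $\mathcal S$. Hypothesis (ii) is immediate, since every defining constraint of $\mathcal S$ is a condition on a proper initial segment.

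The core of the argument is (iii). Along any admissible sequence the $R_\alpha$ are nested and decreasing, and I claim $f(\alpha):=\mu(R_\alpha)$ is non-increasing with $f(\beta+1)<f(\beta)$ at every successor. The successor case is clear, as $f(\beta+1)=f(\beta)-\mu(P_\beta)<f(\beta)$. The limit case is the main obstacle: for a \emph{signed} measure mere inclusion does not control the measure, so I would proceed by transfinite induction, fixing a sequence $\alpha'_n\uparrow\lambda$ cofinal in the countable limit $\lambda$ and invoking continuity from above of $\mu$ along decreasing sequences (a consequence of countable additivity) to get $f(\lambda)=\lim_n f(\alpha'_n)=\inf_n f(\alpha'_n)\le f(\gamma)$ for every $\gamma<\lambda$. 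Granting this monotonicity, an $\omega_1$-long admissible sequence would produce a map $f\colon\omega_1\to\R$ that is non-increasing with $f(\alpha+1)<f(\alpha)$ for all $\alpha$; then the open intervals $\bigl(f(\alpha+1),f(\alpha)\bigr)$ are nonempty and pairwise disjoint, and choosing a rational in each embeds $\omega_1$ into $\Q$, which is absurd.

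With the hypotheses checked, Lemma~\ref{le:ACDC} supplies an admissible $(R_{\alpha'})_{\alpha'<\alpha}\in\mathcal S$ and some $\bar\alpha<\alpha$ with $R_{\bar\alpha}\in F$. Putting $E':=R_{\bar\alpha}$ gives $E'\in\mathcal A$, $E'\subseteq E$, and $E'$ negative, while the monotonicity used in (iii) yields $\mu(E')=f(\bar\alpha)\le f(0)=\mu(E)$, exactly as required. I expect the only genuinely delicate point to be the control of $\mu$ across limit ordinals for a signed measure; the rest is bookkeeping about successor and limit stages together with the standard fact that $\R$ admits no uncountable family of pairwise disjoint intervals.
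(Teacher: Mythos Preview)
Your proposal is correct and follows essentially the same route as the paper's ``small steps'' proof: both apply Lemma~\ref{le:ACDC} to decreasing sequences of subsets of $E$ along which $\mu$ strictly decreases, and both invoke the impossibility of an $\omega_1$-indexed strictly monotone real sequence for hypothesis~(iii). The only cosmetic differences are that the paper takes $\mathcal S$ to be \emph{all} decreasing sequences with strictly decreasing $\mu$-values (without prescribing the successor/limit mechanism) and builds the constraint $\mu(E')\le\mu(E)$ into $F$, whereas you prescribe the construction explicitly and recover that inequality at the end from your monotonicity argument; your more explicit treatment of the limit stage via continuity from above is exactly what the paper's terse ``obvious'' is hiding.
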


\begin{proofSTEPS}
For every integer $n\in\N$ we define a set $E_n\in\mathcal A$ as follows. Let $E_0:=E$ and recursively define $t_n:=\sup_{C\subset E_n} \mu(C)$, then pick  $C_n\subset E_n$ so that $\mu(C_n)\geq\min\{1,\tfrac{t_n}2\}$  and let $E_{n+1}:=E_n\setminus C_n$.

Put $E_\infty:=\cap_nE_n=E\setminus \cup_nC_n$ and notice that  $\mu(E_\infty)=\mu(E)-\sum_n\mu(C_n)\leq\mu(E)$. Suppose $E_\infty$ is not negative. Then there is $C\subset E_\infty\subset E_n$ for every $n$ with $\mu(C)>0$. Thus  $t_n>\mu(C)$ for every $n$ and therefore $\mu(C_n)\geq\min\{1, \tfrac{\mu(C)}2\}$. Since the $C_n$'s are disjoint we would get  $\mu(\cup_nC_n)=\sum_n\mu(C_n)=+\infty$, contradicting the fact that $\mu$ is finite.
 \end{proofSTEPS}
 \begin{proofACDC} We shall use Lemma \ref{le:ACDC} with $A:=\mathcal A$, $F\subset A$ the collection of negative subsets $E'$ of $E$ with $\mu(E')\leq\mu(E)$ (equivalently: of those subsets $E'$ of $E$ so that for no $E''\subset E'$ we have $\mu(E'')<\mu(E')$)   and $\mathcal S$ the collection of decreasing sequences $(E_\beta)_{\beta<\alpha}$ of subsets of $E$ with $E_0:=E$ and   $\beta\mapsto \mu(E_\beta)$ strictly decreasing. The fact that these satisfy properties $(i),(ii)$ in Lemma \ref{le:ACDC} obvious. Property $(iii)$ follows from the  fact that there are no strictly increasing maps from $\omega_1$ to $\R$. 
 
Lemma \ref{le:ACDC}  grants that $F$ is not empty, which is the claim.
 \end{proofACDC}

\subsection{Ekeland's variational principle}
The variational principle of Ekeland allows to show existence of almost minimizers of a lower semicontinuous functional in absence of compactness (but in presence of completeness).

A possible formulation is the following:
\begin{theorem}\label{cor:cor}
Let $(\X,\sfd)$ be a complete metric space, $f:\X\to[0,+\infty]$ be lower semicontinuous and $\bar x\in\X$. Then there is $\bar y\in\X$ with $f(\bar y)+\sfd(\bar x,\bar y)\leq f(\bar x)$ such that
\begin{equation}
\label{eq:quasiminimo}
f(z)+\sfd(z,\bar y)\geq f(\bar y)\qquad\forall z\in\X.
\end{equation}
\end{theorem}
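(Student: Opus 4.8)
The plan is to obtain the statement from the transfinite recursion of Lemma~\ref{le:ACDC}, run along the partial order on $\X$ that is customary in proofs of Ekeland's principle: for $x,y\in\X$ write $y\preceq x$ to mean $f(y)+\sfd(x,y)\le f(x)$. This is indeed a partial order --- reflexivity is clear, antisymmetry follows by adding the two defining inequalities (on the relevant points $f$ will be finite), transitivity from the triangle inequality --- and $y\preceq x$ forces $f(y)\le f(x)$, with strict inequality when $y\ne x$. I would also record the elementary fact that a point $y$ fails the inequality in \eqref{eq:quasiminimo} precisely when there is $z$ with $f(z)+\sfd(z,y)<f(y)$, and that any such $z$ then satisfies $f(z)<f(y)$ and $z\preceq x$ whenever $y\preceq x$. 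Finally I would dispose of the case $f(\bar x)=+\infty$ at the outset: if $f\equiv+\infty$ then $\bar y:=\bar x$ works, and otherwise any $x_1$ with $f(x_1)<+\infty$ satisfies $x_1\preceq\bar x$, so a short computation with the triangle inequality shows that a point fulfilling the conclusion with base point $x_1$ also fulfils it with base point $\bar x$; hence we may assume $f(\bar x)<+\infty$.

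I would then apply Lemma~\ref{le:ACDC} with $A:=\X$, with $F$ the set of $y\in\X$ such that $y\preceq\bar x$ and $f(z)+\sfd(z,y)\ge f(y)$ for every $z$ --- that is, exactly the points fulfilling the conclusion of the theorem --- and with $\mathcal S$ the collection of transfinite sequences $(x_\beta)_{\beta<\alpha}$ such that $x_0=\bar x$, $x_{\beta'}\preceq x_\beta$ whenever $\beta<\beta'<\alpha$, and $\beta\mapsto f(x_\beta)$ is strictly decreasing. Then $\mathcal S\ne\emptyset$ (it contains the one-term sequence $(\bar x)$); property $(ii)$ is immediate, since membership in $\mathcal S$ is imposed by conditions involving only indices of the sequence itself; and property $(iii)$ holds because a sequence $(x_\beta)_{\beta<\omega_1}\in\mathcal S$ would give a strictly decreasing map from $\omega_1$ into $[0,f(\bar x)]\subset\R$, which does not exist. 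For property $(i)$ at a successor $\alpha=\gamma+1$: the last term $x_\gamma$ always satisfies $x_\gamma\preceq\bar x$ (it is $x_0$, or precedes $x_0$), so if $x_\gamma\notin F$ there is $z$ with $f(z)+\sfd(z,x_\gamma)<f(x_\gamma)$, and putting $x_\alpha:=z$ keeps the extended sequence in $\mathcal S$ by transitivity of $\preceq$ and the strict inequality $f(z)<f(x_\gamma)$.

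The step I expect to be the real obstacle is property $(i)$ at a limit ordinal $\alpha<\omega_1$, and this is precisely where completeness and lower semicontinuity enter. Given $(x_\beta)_{\beta<\alpha}\in\mathcal S$ with no term in $F$, I would fix an increasing sequence $\beta_0<\beta_1<\cdots$ cofinal in $\alpha$ (available since $\alpha$ is a countable limit ordinal). From $x_{\beta_{n+1}}\preceq x_{\beta_n}$ one gets $\sfd(x_{\beta_n},x_{\beta_{n+1}})\le f(x_{\beta_n})-f(x_{\beta_{n+1}})$, and since $n\mapsto f(x_{\beta_n})$ is non-increasing and nonnegative the series $\sum_n\sfd(x_{\beta_n},x_{\beta_{n+1}})$ converges; hence $(x_{\beta_n})_n$ is Cauchy and, by completeness, converges to some $x_\ast\in\X$. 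Letting $m\to\infty$ in $f(x_{\beta_m})+\sfd(x_\beta,x_{\beta_m})\le f(x_\beta)$ (valid once $\beta_m\ge\beta$) and using continuity of $\sfd$ together with the lower semicontinuity of $f$ gives $f(x_\ast)+\sfd(x_\beta,x_\ast)\le f(x_\beta)$, i.e. $x_\ast\preceq x_\beta$, for every $\beta<\alpha$. Moreover, since $f$ strictly decreases along the chain, $L:=\lim_n f(x_{\beta_n})=\inf_{\beta<\alpha}f(x_\beta)$ is not attained, so $f(x_\ast)\le L<f(x_\beta)$ for every $\beta<\alpha$, and in particular $x_\ast\ne x_\beta$. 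Thus $x_\alpha:=x_\ast$ extends the sequence within $\mathcal S$, establishing $(i)$.

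With $(i)$--$(iii)$ and $\mathcal S\ne\emptyset$ in hand, Lemma~\ref{le:ACDC} delivers a sequence in $\mathcal S$ one of whose terms, call it $\bar y$, lies in $F$; then $\bar y\preceq\bar x$ and $\bar y$ satisfies \eqref{eq:quasiminimo}, which is the claim. I would close with a remark comparing this to the classical ``big steps'' argument, which instead picks each $x_{n+1}\preceq x_n$ with $f(x_{n+1})$ below the midpoint between $f(x_n)$ and $\inf\{f(z):z\preceq x_n\}$ --- a choice engineered so that the construction stabilizes after $\omega$ steps --- whereas here no quantitative control of the steps is needed, any strict decrease of $f$ being enough, at the cost of running the recursion through the countable ordinals.
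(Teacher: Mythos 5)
Your proof is correct and takes essentially the same route as the paper: the same Ekeland partial order, the same reduction to $f(\bar x)<\infty$, and the same application of Lemma \ref{le:ACDC}, with the limit-ordinal step handled by the identical Cauchy/lower-semicontinuity argument. The only notable difference is bookkeeping: the paper's $\mathcal S$ consists of continuous, injective, $\leq$-non-increasing sequences (so it must verify the chain inequality by transfinite induction and check the limit point is independent of the chosen cofinal sequence), whereas you build full chain comparability and strict decrease of $f$ directly into $\mathcal S$, which makes property $(iii)$ immediate and removes the need for that independence check.
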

\begin{proofSTEPS}
Define the relation $\leq$ on $\X$ by declaring that $z_1\leq z_2$ whenever 
\[
f(z_1)+\sfd(z_1,z_2)\leq f(z_2).
\]
It is clear that this is a partial order and by lower semicontinuity of $f$ that $\{y\in\X:y\leq x\}$ is closed for every $x\in\X$. Now consider the given $\bar x$ and notice that if there is no $x\leq\bar x$ with $f(x)<\infty$, then the choice $\bar y:=\bar x$ satisfies the conclusion, otherwise replacing $\bar x$ with such $x$ we can assume that $f(\bar x)<\infty$.

For every $x\in\X$ define
\[ 
i(x):=\inf_{y\leq x}f(y),\qquad\text{ and }\qquad D(x):=f(x)-i(x)\geq 0
\]
and recursively define a sequence $(x_n)\subset\X$ by putting $x_0:=\bar x$ and then, given $x_n$, find $x_{n+1}\leq x_n$ with $f(x_{n+1})\leq \tfrac12(i(x_n)+f(x_n))$. It is clear that such $x_{n+1}$ can be found, that $\sfd(x_{n+1},x_n)\leq D(x_n)$ and that $D(x_{n+1})\leq\tfrac12 D(x_n)$. Since $D(x_0)<\infty$ we conclude that $(x_n)$ is Cauchy, hence it converges to a limit $x_\infty$. The lower semicontinuity of $f$ and the fact that $i$ is $\leq$-non-decreasing show that $D(x_\infty)=0$. This means that $f(x_\infty)=i(x_\infty)$ and thus that $\bar y:=x_\infty\leq\bar x$ is the desired point.
\end{proofSTEPS}
\begin{proofACDC}
Define the relation $\leq$ on $\X$ by declaring that $z_1\leq z_2$ whenever 
\[
f(z_1)+\sfd(z_1,z_2)\leq f(z_2).
\]
It is clear that this is a partial order and by lower semicontinuity of $f$ that $\{y\in\X:y\leq x\}$ is closed for every $x\in\X$. Now consider the given $\bar x$ and notice that if there is no $x\leq\bar x$ with $f(x)<\infty$, then the choice $\bar y:=\bar x$ satisfies the conclusion, otherwise replacing $\bar x$ with such $x$ we can assume that $f(\bar x)<\infty$.

We use Lemma \ref{le:ACDC} with $A:=\X$, $F\subset A$ the collection of points $\leq\bar x$ that are $\leq$-minimizers, i.e.\ points $x$ so that $y\leq x$ implies $y=x$, and $\mathcal S$ the collection of continuous, injective and $\leq$-non-increasing sequences bounded from above by $\bar x$. It is clear that assumption $(ii)$ of Lemma \ref{le:ACDC}  are satisfied. $(iii)$ is satisfied as well because along any sequence in $\mathcal S$ the function $f$ is real valued, monotone and injective, and we already recalled that no such map can be defined on $\omega_1$. It thus remains to show $(i)$. This is obvious if $\alpha$ is a successor ordinal, as in this case any $\alpha$-sequence $(x_\beta)_{\beta<\alpha}$ has $x_{\alpha-1}$ as last element, and if it is not $\leq$-minimizer, an $x_\alpha\lneq x_{\alpha-1}$ exists. Thus say that $\alpha$ is a limit ordinal. We claim that for every  $\alpha$-sequence $(x_\beta)_{\beta<\alpha}\in\mathcal S$ we have
\begin{equation}
\label{eq:cauchyord}
f(x_\beta)+\sfd(x_\beta,x_\gamma)\leq f(x_\gamma)\qquad\forall \gamma\leq\beta<\alpha.
\end{equation}
 We show this by transfinite induction on $\beta$ ($\alpha$ is fixed here and can be assumed to be $>0$). It is clear that \eqref{eq:cauchyord} holds for $\beta=0$ and that its validity for $\beta$ implies that for $\beta+1$. For $\beta$ limit, let $(\beta_n)$ be an increasing sequence whose supremum is $\beta$
and notice that \eqref{eq:cauchyord} and the inductive assumption show that $n\mapsto x_{\beta_n}$ is a Cauchy sequence: its limit must, by continuity of $(x_\cdot)$, coincide with $x_\beta$. Also, by lower semicontinuity of $f$ we see that \eqref{eq:cauchyord} holds for $\beta$.

Thus \eqref{eq:cauchyord} holds. Now let   $(\alpha_n)$ be increasing with supremum $\alpha$. As before, \eqref{eq:cauchyord} implies that $n\mapsto x_{\alpha_n}$ is Cauchy, hence admits a limit and this limit does not depend on the sequence $(\alpha_n)$ chosen, as interlacing any two such sequences we still have that the limit exists. Call $x_\alpha$ such limit and notice that, by construction and arguments already used, the sequence $(x_\beta)_{\beta\leq \alpha}$ belongs to $\mathcal S$, as desired.

It follows by Lemma \ref{le:ACDC}    that $F$ is not empty, which was the claim.
\end{proofACDC}

\subsection{Maximal Globally Hyperbolic Development}

An \emph{initial data set} is a triple $(\Sigma,h,\kappa)$ with $\Sigma$ being a smooth 3-dimensional manifold, $h$ a smooth Riemannian metric on it and $\kappa$  a symmetric 2-form satisfying the constraint equations
\[
\begin{split}
R+({\rm tr}_h\kappa)^2-\|\kappa\|^2_h&=0,\\
{\rm div}_h(\kappa)-\d({\rm tr}_h\kappa)&=0.
\end{split}
\]
A \emph{development} of $(\Sigma,h,\kappa)$ is a triple $(M,g,\varphi)$ where:
\begin{itemize}
\item[i)] $(M,g)$ is a Ricci-flat time-oriented spacetime,
\item[ii)] $\varphi:\Sigma\to M$ is a smooth isometric diffeomorphism of $\Sigma$ and $S:=\varphi(\Sigma)\subset M$,
\item[iii)] $\varphi$ sends $\kappa$ to the second fundamental form of $S$,
\item[iv)] $S$ is a Cauchy hypersurface of $M$.
\end{itemize}
In particular, $M$ solves Einstein's equations in the vacuum. Also, it admits a Cauchy hypersurface and thus is globally hyperbolic. For the terminology we refer to \cite{CBGer69}. We observe that the embedding $\varphi:\Sigma\to M$ induces, for every $p\in\Sigma$, a unique causal isometry $\Phi_p$ from $(\R\times T_p\Sigma,\d t^2-h)$ to $(T_{\varphi(p)}M,g)$ sending $(0,v)$ to $\d\varphi(v)$, being intended that $(1,0)\in\R\times T_p\Sigma$ is future directed and is therefore sent to the future-directed normal to $S$ at $\varphi(p)$.

Following \cite{CBGer69}, we say that a development $(\tilde M,\tilde g,\tilde \varphi)$ is an extension of another development $( M, g,\varphi)$ provided there is an injective smooth causal isometry $\psi: M\to\tilde M$ such that $\psi\circ\varphi=\tilde\varphi$. Notably, such $\psi$ is unique and   satisfies
\begin{equation}
\label{eq:phip}
\d\psi_{\varphi(p)}\circ\Phi_p=\tilde\Phi_p\qquad\forall p\in\Sigma.
\end{equation}
We discuss uniqueness.  The fact that any smooth causal isometry $\psi:M\to\tilde M$ must satisfy \eqref{eq:phip} is obvious, and so is the fact that \eqref{eq:phip} fully characterizes $\d\psi:T_{\varphi_p}M\to T_{\tilde\varphi(p)}\tilde M$. Now for uniqueness of $\psi$ let $q\in M$ and $\gamma$ an inextendible timelike geodesic (intended as solution of the geodesic equation) passing through $q$. Since $\varphi(\Sigma)$ is a Cauchy hypersurface of $M$, $\gamma$ must cross it at some time that, up to translation, we can assume to be 0. Let then $T\in\R$ be so that $\gamma_T=q$ and notice that the curve $t\mapsto\tilde\gamma_t:=\psi(\gamma_t)\in\tilde M$ is a geodesic in $\tilde M$, thus its value $\psi(q)$ at $t=T$ is fully determined by $\tilde\gamma_0$ and  $\tilde\gamma_0'$. Since we have $\tilde\gamma_0=\tilde\varphi(\varphi^{-1}(\gamma_0))$ and, by \eqref{eq:phip}, $\tilde\gamma'_0=\tilde\Phi_p(\Phi_p^{-1}(\gamma_0'))$, where $p:=\varphi^{-1}(\gamma_0)$, we see that $\tilde\gamma_0$ and  $\tilde\gamma_0'$ only depend on $\gamma$, $\varphi$ and $\tilde\varphi$. In particular, they do not depend on $\psi$, proving the desired uniqueness.

We shall write $(M_1,g_1,\varphi_1)\preceq (M_2,g_2,\varphi_2)$, or simply $M_1\preceq M_2$, if $(M_2,g_2,\varphi_2)$ extends $(M_1,g_1,\varphi_1)$. The uniqueness just proved shows that this is a `partial order up to unique isomorphism', i.e.\ that $M_1\preceq M_2$ and $M_2\preceq M_1$ imply that the  maps $\psi:M_1\to M_2$ and $\tilde\psi:M_2\to M_1$ associated to these extensions are global causal isometric diffeomorphisms, one the inverse of the other (as by uniqueness $\tilde\psi\circ\psi$ must be the identity on $M_1$ and $\psi\circ\tilde\psi$ that on $M_2$). When this happens we write $M_1\cong M_2$.

We also notice that:
\begin{equation}
\label{eq:podev}
M_1\preceq M_2\text{ and }M_1\not\cong M_2\qquad\Rightarrow\qquad M_2\setminus \overline{\psi(M_1)}\text{ is not empty.}
\end{equation}
Indeed, by assumption there is $p\in M_2\setminus\psi(M_1)$ and by symmetry we can assume that $p$ is in the future of $\varphi_2(\Sigma)$. We claim that the chronological future $I^+(p)$ of $p$ in $M_2$, that is open and not empty, does not meet $\psi(M_1)$. Say otherwise, let $q\in I^+(p)\cap\psi(M_1)$, let $\gamma:[0,1]\to M_2$ be a timelike curve from $p$ to $q$ and $(a,b)\subset[0,1]$ be the maximal open subinterval such that $q\in\gamma((a,b))\subset \psi(M_1)$. Then $\gamma(a)\notin\psi(M_1)$ and thus the curve $\psi^{-1}\circ\gamma:(a,b)\to M_1$ lives in the future of $\varphi_1(\Sigma)$, is past-inextendible and does not meet $\varphi_1(\Sigma)$, contradicting the assumption that $\varphi_1(\Sigma)$ is a Cauchy hyersurface of $M_1$.

The local existence and uniqueness of developments  can be stated as:
\begin{theorem}\label{thm:local}
Let $(\Sigma,h,\kappa)$ be an initial data set. Then it admits a development. Also, if $(M_1,g_1,\varphi_1)$ and  $(M_2,g_2,\varphi_2)$ are two such developments, then they are both extension of a common development, i.e. there is a development  $(M,g,\varphi)$ with $M\preceq M_1$ and $M\preceq M_2$.
\end{theorem}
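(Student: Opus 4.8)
The plan is to follow the classical Choquet-Bruhat--Geroch route \cite{CBGer69}, splitting the statement into a \emph{local existence} part and a \emph{local geometric uniqueness} part, both of which reduce, via the harmonic (wave) gauge, to the Cauchy problem for a quasilinear system of wave equations.

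For existence I would work in wave coordinates, in which the vacuum equations $\ric_g=0$ become the \emph{reduced Einstein equations} $g^{\alpha\beta}\partial_\alpha\partial_\beta g_{\mu\nu}=N_{\mu\nu}(g,\partial g)$, a quasilinear wave system. In local charts adapted to $\Sigma$ the initial value of $g_{\mu\nu}$ is dictated by $h$, its normal derivative by $\kappa$, and the wave-gauge condition fixes the remaining first derivatives; the constraint equations are then exactly what makes the wave gauge, and hence $\ric_g=0$, propagate off $\Sigma$ (via the contracted Bianchi identity), so that any solution of the reduced system is a genuine vacuum solution. Standard well-posedness for quasilinear wave equations — Sobolev energy estimates plus an iteration/fixed-point scheme, followed by a bootstrap to smoothness since the data is smooth — produces, in each chart, a solution on a lens-shaped region; finite speed of propagation and uniqueness glue these pieces into a Ricci-flat Lorentzian manifold $U$ containing a copy of $\Sigma$. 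Since $\Sigma\cong S:=\varphi(\Sigma)$ is acausal and edgeless in $U$, I would then pass to $M:=\mathrm{int}\,D_U(S)$, which is globally hyperbolic with $S$ a Cauchy hypersurface; endowed with the restricted metric and $\varphi$, this is the desired development.

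For the common development, given $(M_1,g_1,\varphi_1)$ and $(M_2,g_2,\varphi_2)$ I would construct, chart by chart over $\Sigma$, harmonic coordinate systems near $S_1=\varphi_1(\Sigma)$ and near $S_2=\varphi_2(\Sigma)$; since $\varphi_1^*h=\varphi_2^*h=h$, each $\varphi_i$ sends $\kappa$ to the second fundamental form of $S_i$, and both $g_i$ are Ricci-flat, the Cauchy data for the metric components in these coordinates coincide, so the uniqueness half of the well-posedness theorem forces the metric components to agree. This yields local isometries near $S_1$ compatible with the embeddings; by uniqueness they glue to a single time-orientation-preserving causal isometry $\chi\colon U_1\to U_2$ of open neighborhoods with $\chi\circ\varphi_1=\varphi_2$ (injectivity and causal character being secured by shrinking $U_1$ to a tubular neighborhood of $S_1$, using that $\d\chi$ sends the future normal of $S_1$ to that of $S_2$). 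Setting $M:=\mathrm{int}_{U_1}D_{U_1}(S_1)$, $g:=g_1|_M$, $\varphi:=\varphi_1$, the acausality/edgelessness remark above makes $M$ a development; $M\preceq M_1$ via the open inclusion; and because $\chi$ carries $\mathrm{int}\,D_{U_1}(S_1)$ onto $\mathrm{int}\,D_{U_2}(S_2)\subseteq M_2$, the composition of $\chi|_M$ with the inclusion $U_2\hookrightarrow M_2$ witnesses $M\preceq M_2$.

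I expect the main obstacle to be the analytic core: local well-posedness — existence, uniqueness, and Cauchy stability with finite propagation speed — for the reduced Einstein system in Sobolev spaces, which is where the genuine work lies (the energy estimates and the iteration scheme), whereas the geometric ingredients (propagation of the wave gauge through the Bianchi identity, the gluing of local solutions and of local isometries, and the passage to $\mathrm{int}\,D(S)$) are comparatively soft. A secondary, routine-but-delicate point is checking that the harmonic-gauge Cauchy data is \emph{entirely} determined by $(\Sigma,h,\kappa)$, so that the two developments really feed the same data into the uniqueness statement.
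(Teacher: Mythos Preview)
The paper does not prove Theorem~\ref{thm:local}: it is quoted as the classical local existence and uniqueness result of Choquet-Bruhat and Geroch \cite{CBGer69}, and is used as a black box to seed the transfinite construction of the maximal development. So there is no proof in the paper to compare your proposal against.

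That said, your sketch is the standard Choquet-Bruhat route and is essentially correct in outline: harmonic-gauge reduction to a quasilinear wave system, propagation of the gauge via the contracted Bianchi identity, local well-posedness in Sobolev spaces with finite propagation speed to glue charts, and passage to the interior of the domain of dependence to obtain global hyperbolicity. For the common-development part your construction of $\chi$ via matching harmonic-coordinate Cauchy data is also the classical one. You have correctly identified where the real work sits (the analytic well-posedness) and the one genuinely delicate bookkeeping point (that the full first-order Cauchy data in harmonic gauge is determined by $(h,\kappa)$ alone, independently of the ambient development). If you were to flesh this out, the only place I would urge more care is the gluing of the local isometries into a single injective $\chi$: one must check that the locally defined harmonic coordinates, hence the local isometries, agree on overlaps, which again reduces to the uniqueness statement for the reduced system but deserves an explicit sentence.
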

In \cite{CBGer69} the authors started from this result and, via an argument based on Zorn's lemma, deduced existence and uniqueness of a maximal global hyperbolic development (see Theorem \ref{thm:main} below for the precise meaning of this). We are going to show that in fact for their very same proof to work Lemma  \ref{le:ACDC} (and thus ${\rm DC}_{\omega_1}$) suffices.

The upper bound on the (ordinal) number of choices needed comes from the following proposition, that is a restatement of a result obtained by Geroch in \cite{Ger68}. We report (a minor modification of) the proof to emphasize that the signature of the non-degenerate metric tensor plays no role.

Before coming to the statement, let us agree that a \emph{smooth manifold} is a Hausdorff topological space that is locally homeomorphic to $\R^d$ for some fixed $d\in\N$ and so that the `change of coordinates' are $C^\infty$. Notice that we are not insisting on manifolds to be separable, and in fact
\[
\text{there are examples of connected smooth manifolds that are non-separable,}
\]
the typical one being the `long line', see Remark \ref{re:longest} for more on this.

With this said, we have:
\begin{proposition}\label{prop:geroch}
Let $M$ be a connected smooth manifold and $g$ a non-degenerate and smooth metric tensor on it.

Then $M$ is separable.
\end{proposition}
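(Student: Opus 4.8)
The plan is to exploit the metric tensor $g$ to equip $M$ with a genuine Riemannian metric, and then invoke the classical fact that a connected Riemannian manifold admitting \emph{any} complete — or even just non-complete but globally defined — Riemannian distance is second countable, hence separable. Concretely, I would first show that on any smooth manifold admitting a non-degenerate smooth tensor $g$ of arbitrary signature one can build a smooth \emph{positive-definite} metric $\tilde g$: locally, diagonalize $g$ and flip the signs of the negative eigenspaces (or more robustly, pick a local frame, take the matrix $G$ of $g$ in that frame and replace it by $(G^2)^{1/2}$, which is smooth since $G$ is invertible, symmetric, and $G^2$ is positive definite). These local positive-definite tensors need not patch, but since each coordinate domain is second countable and the construction is canonical enough, one can instead argue as follows — which is really Geroch's point — and this is the step I expect to matter most.

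The crux is a purely topological lemma: a connected manifold $M$ (Hausdorff, locally Euclidean, $C^\infty$) on which there exists a continuous \emph{nowhere-vanishing symmetric bilinear form that is non-degenerate}, i.e.\ a continuous section of $S^2 T^*M$ that is fiberwise non-degenerate, must be separable. The mechanism: non-degeneracy lets us define, along any piecewise-smooth curve, the ``$g$-length'' $\int |g(\dot\gamma,\dot\gamma)|^{1/2}\,dt$, or better, fix an auxiliary \emph{positive-definite} smooth metric $\tilde g$ obtained as above and let $\rho$ be the associated Riemannian distance (finite on each connected component, which here is all of $M$). The key claim is that $(M,\rho)$ is a separable metric space. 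One proves this by a bootstrapping argument: $M$ is locally second countable, hence each point has a countable basis of $\rho$-balls with rational radii; a connected, locally separable metric space is separable (cover by a chain of separable open sets, using that connectedness forces the ``separable part'' to be clopen). The role of $g$'s non-degeneracy is precisely to guarantee that such a globally-defined positive-definite $\tilde g$ exists at all — a bare topological manifold need not admit a Riemannian metric if it is not paracompact, and indeed the long line does not. So the heart of the matter is: \emph{non-degeneracy of a global smooth tensor forces paracompactness}, whence separability for connected manifolds.

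Here is the argument I would actually write down for that heart. Suppose $g$ is a smooth non-degenerate tensor on connected $M$. Define $\tilde g$ locally by the $(G^2)^{1/2}$ recipe above; the ambiguity between overlapping charts is resolved by taking, say, a locally finite refinement — but we cannot assume local finiteness yet. Instead, follow Geroch: let $U \subset M$ be the union of all open, connected, separable subsets of $M$ containing a fixed basepoint $x_0$; I would show $U$ is both open (clear) and closed. For closedness, take $y \in \overline U$; pick a connected separable chart $V \ni y$ (any chart is separable, being homeomorphic to an open subset of $\R^d$); since $y \in \overline U$, $V$ meets $U$, so $U \cup V$ is connected, and it is separable as a union of two separable sets, hence $y \in U$. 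By connectedness $U = M$, so $M$ is separable. Note this last argument did not even use $g$ — so the \emph{real} subtlety, and the thing I'd be careful to state correctly, is that this works because \emph{every chart is separable}, which needs no hypothesis; the hypothesis on $g$ is what the paper's surrounding narrative cares about (it provides the ``scale measured by reals''), and I would present the proof in that spirit, possibly phrasing separability via the distance $\rho$ to make the ``real-valued scale'' explicit, even though the bare topological argument suffices. The main obstacle is thus not difficulty but temptation to overcomplicate: the clean path is the open-closed dichotomy on ``the separable part,'' and I would resist dragging in the metric unless the narrative demands it.
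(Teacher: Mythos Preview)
Your proof has a genuine gap. The open-closed argument you give at the end would, if correct, prove that \emph{every} connected manifold is separable --- as you yourself notice, it ``did not even use $g$.'' But the long line is a connected smooth $1$-manifold that is not separable, so the argument must fail somewhere. The error is in the sentence ``$U \cup V$ is connected, and it is separable as a union of two separable sets'': you have not shown that $U$ itself is separable. By definition $U$ is a union of possibly uncountably many separable open sets, and such a union need not be separable. On the long line, every point lies in an open connected separable interval containing the basepoint, so your $U$ equals the whole long line; the argument then ``proves'' the long line is separable, which it is not. The parenthetical sketch ``connectedness forces the separable part to be clopen'' has the same defect.

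Your first approach, building a positive-definite $\tilde g$ via a local $(G^2)^{1/2}$ recipe, fails for the reason you yourself identify: the construction is frame-dependent, and patching the local tensors requires a partition of unity, i.e.\ paracompactness, which is equivalent to what is to be proved. The paper uses $g$ in an essential and different way: non-degeneracy of $g$ gives well-defined Christoffel symbols regardless of signature, hence a geodesic equation and an exponential map $\exp:DM\to M$. One sets $M_0:=\exp_{\bar x}(U_{\bar x})$ for a fixed $\bar x$ and inductively $M_{n+1}:=\bigcup_{x\in M_n}\exp_x(U_x)$. The crucial step --- where $g$ does real work --- is that $M_{n+1}$ is separable because it is the image under the continuous map $\exp$ of a subset of $TM_n$, and $TM_n$ is separable as soon as $M_n$ is. This is what lets separability propagate without ever needing to extract countable subcovers of $\overline{M_n}$ (a step the paper explicitly flags as problematic in higher dimensions, see the remark on the long line). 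Closedness of $M_\infty:=\bigcup_n M_n$ then follows from the inverse function theorem applied to $\exp$. Without the exponential map, or some analogous global structure supplied by $g$, there is no evident mechanism to carry separability across the manifold; this is exactly the content of Geroch's observation.
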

\begin{proof}
The key of the proof is in existence of the exponential map and in its local invertibility, properties that have nothing to do with the potential lack of separability. 

Since $g$ is smooth and non-degenerate, in any smooth coordinate system the Christoffel symbols $\Gamma^k_{ij}=\tfrac12g^{km}(\tfrac{\partial g_{mj}}{\partial x^i}+\tfrac{\partial g_{im}}{\partial x^j}-\tfrac{\partial g_{ij}}{\partial x^m})$ are well defined and smooth. Hence the geodesic equation $\tfrac{\d^2\gamma^k}{\d t^2}+\Gamma^k_{ij}\frac{\d\gamma^i}{\d t}\frac{\d\gamma^j}{\d t}=0$ admits, for any initial datum, unique smooth solutions existing for some positive time. For any $x\in M$ we denote by $D_x\subset T_xM$ the domain of the exponential map, i.e.\ the collection of those $v\in T_xM$ such that there is $\gamma:[0,1]\to M$ with $\gamma_0=x$, $\gamma_0'=v$ locally solving the geodesic equation in coordinates. Being such $\gamma$  unique, the definition $\exp_x(v):=\gamma_1$ is well posed and an argument based on smooth dependence of solutions w.r.t.\ initial data shows that $DM:=\{(x,v)\in TM:v\in D_x\subset T_xM\}\subset TM$ is open and that $\exp:DM\to M$ is smooth. For the same reason, the set $U_x\subset D_x$ of those $v$'s such that the differential of $\exp_x$ at $v$ is not singular is also open. Then the inverse function theorem grants that $\exp_x:U_x\to M$ is a local diffeomorphism and in particular that $\mathcal U_x:=\exp_x(U_x)\subset M$ is open. Moreover, since $(\d\exp_x)(0)$  is, by the very definition of $\exp_x$, the identity, the inverse function theorem again grants that
\begin{equation}
\label{eq:dainv}
\forall x\in M\text{ there is a neighbourhood $\mathcal V_x\subset M$ of $x$ such that  $x\in\mathcal U_y$ for any $y\in\mathcal V_x$.}
\end{equation}
Now fix $\bar x\in M$ and recursively define $M_n\subset M$ as: $M_0:=\mathcal U_{\bar x}$ and given $M_n$ put $M_{n+1}:=\cup_{x\in M_n}\mathcal U_x$. Put $M_\infty:=\cup_nM_n$. Since the $\mathcal U_x$'s are open, so is $M_\infty$. Also, since $\mathcal U_{\bar x}\subset T_{\bar x}M$ is separable and $\exp_x:U_x\to\mathcal U_x$ is continuous, we see that $M_0=\mathcal U_{\bar x}$ is separable. Then an induction argument based on the separability of $TM_n$ and the continuity of $\exp$ shows that $M_n$ is separable for every $n\in\N$. Hence $M_\infty$ is also separable.

Since $M$ is connected and $M_\infty\subset M$ open and separable, to conclude it suffices to prove that $M_\infty$ is also closed. Thus let $x\in\overline{M}_\infty$  and use \eqref{eq:dainv} to find $\mathcal V_x\subset M$ with the stated properties. In particular, $\mathcal V_x\cap M_{\infty}\neq\emptyset$ and we can find $y\in V_x\cap M_{\infty}$ and thus $n\in\N$ so that $y\in \mathcal V_x\cap M_{n}$. Property \eqref{eq:dainv} then grants that $x\in\mathcal U_y$, hence  $x\in M_{n+1}\subset M_\infty$, as desired.
\end{proof}
\begin{remark}{\rm
In the above statement we used regularity of $g$  to be able to define the exponential map. 

We suspect the same conclusions hold even for $g$ that is merely continuous.
}\fr\end{remark}
We turn to the main proof of the section:
\begin{theorem}\label{thm:main}
Let $(\Sigma,h,\kappa)$ be an initial data set. Then there exists a unique maximal development $(M,g,\varphi)$ of it. This means that:
\begin{itemize}
\item[1)] (Maximality) If $(M',g',\varphi')$ is another development, then $M'\preceq M$;
\item[2)] (Uniqueness) If $(\tilde M,\tilde g,\tilde \varphi)$  is another development satisfying (1) above, then $\tilde M\cong  M$.
\end{itemize}
\end{theorem}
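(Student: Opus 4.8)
The plan is to run the Choquet--Bruhat--Geroch argument of \cite{CBGer69} but to feed it into Lemma \ref{le:ACDC} in place of Zorn's lemma. The first point is that Proposition \ref{prop:geroch} lets us treat the collection of all developments of $(\Sigma,h,\kappa)$ as a genuine set: every development's spacetime $M$ carries the non-degenerate tensor $g$, and (assuming $\Sigma$ connected, whence $\varphi(\Sigma)$ is a connected Cauchy hypersurface and $M$ is connected) it is therefore separable, hence second countable, hence of cardinality at most that of $\R\times\Sigma$; realizing developments up to $\cong$ on a fixed ambient set yields a set $A$. Put $F\subset A$ equal to the collection of maximal developments --- those admitting no proper extension --- and let $\mathcal S$ be the collection of transfinite sequences $(M_{\alpha'})_{\alpha'<\alpha}$ of developments that start at a fixed $M_0$ (which exists by Theorem \ref{thm:local}), are $\preceq$-increasing, and satisfy $M_{\alpha'}\not\cong M_{\alpha''}$ whenever $\alpha'\neq\alpha''$.

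Next I would verify hypotheses (i)--(iii) of Lemma \ref{le:ACDC}. Hypothesis (ii) is immediate, since membership of a sequence in $\mathcal S$ is determined by its proper initial segments. For (i) with $\alpha$ a successor, the last term of the sequence is by assumption not maximal, so we append a proper extension of it, and strictness of the new sequence follows from \eqref{eq:podev} together with the ``partial order up to unique isomorphism'' property established before the statement. For (i) with $\alpha$ a limit, we append the increasing union $M_\alpha:=\bigcup_{\alpha'<\alpha}M_{\alpha'}$ along the canonical embeddings \eqref{eq:phip}, using that a countable increasing union of developments is again a development; this is routine but not vacuous --- Hausdorffness of $M_\alpha$ is automatic (any two of its points lie in a common $M_{\alpha'}$, which is open in $M_\alpha$, and are separated there), the smooth and Lorentzian structures glue by compatibility of the embeddings, Ricci-flatness is local, and $\varphi_\alpha(\Sigma)$ remains a Cauchy hypersurface (an inextendible timelike curve of $M_\alpha$ is inextendible already in any $M_{\alpha'}$ containing it, and otherwise is followed through the chain). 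Hypothesis (iii) is where Proposition \ref{prop:geroch} is essential: an $\omega_1$-long strictly increasing chain would, via the same union, produce a connected smooth manifold $M_{\omega_1}$ carrying a non-degenerate metric, hence separable, hence with a countable base $\{B_n\}_{n\in\N}$; writing $U_\alpha\subset M_{\omega_1}$ for the open image of $M_\alpha$, property \eqref{eq:podev} provides for each $\alpha<\omega_1$ a point of $U_{\alpha+1}$ outside $\overline{U_\alpha}$, hence a least $n(\alpha)\in\N$ with $B_{n(\alpha)}\subset U_{\alpha+1}$ and $B_{n(\alpha)}\cap U_\alpha=\emptyset$, and since $B_{n(\alpha)}\subset U_\beta$ while $B_{n(\beta)}\cap U_\beta=\emptyset$ for $\alpha<\beta$, the assignment $\alpha\mapsto n(\alpha)$ is an injection of $\omega_1$ into $\N$ --- impossible. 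Lemma \ref{le:ACDC} then delivers an element of $F$, i.e.\ a maximal development $M$.

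The remaining step --- upgrading ``maximal'' to ``maximum'' --- is the one I expect to be the genuine obstacle, and it is exactly the delicate point of \cite{CBGer69}, independent of how the recursion is organized. Given the maximal $M$ and an arbitrary development $M'$, Theorem \ref{thm:local} provides a common sub-development, and a standard maximality argument (as in \cite{CBGer69}, using the uniqueness of extensions recalled before the statement) produces a largest common sub-development $M''$, with $M''\preceq M$ and $M''\preceq M'$. Gluing $M$ and $M'$ along $M''$ yields a candidate common extension $\tilde M$, and the non-obvious claim, which I would import from \cite{CBGer69}, is that $\tilde M$ is Hausdorff and hence a genuine development: otherwise one would find $p\in M\setminus M''$ and $q\in M'\setminus M''$ approached by a common sequence in $M''$, and the inextendible timelike geodesics through $p$ and $q$ --- which cross $\Sigma$ by the Cauchy property --- would exhibit a strictly larger common sub-development, contradicting the maximality of $M''$. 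Granting this, $\tilde M\succeq M$ forces $\tilde M\cong M$ by maximality of $M$, so $M'\preceq\tilde M\cong M$; this is assertion (1). Assertion (2) then follows formally: if $\tilde M$ also satisfies (1) then $M\preceq\tilde M$, while $\tilde M\preceq M$ because $M$ satisfies (1), so $M\cong\tilde M$ by the same ``partial order up to unique isomorphism'' property.

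Finally, I would note the alternative ``big steps'' route, which dispenses with ordinals: one attaches to each development a real number measuring its size --- essentially a fixed-weight integral, over the timelike geodesics issuing from $\varphi(\Sigma)$, of $\arctan$ of the proper time for which they remain in the development --- which is $\preceq$-monotone and, in view of \eqref{eq:podev}, strictly increasing under proper extensions; maximizing it along a countable chain (and invoking the same gluing input of \cite{CBGer69}) again produces the maximal development. Checking that this quantity is \emph{strictly} monotone is the only subtle point of the variant, and is again where one uses that developments are second countable.
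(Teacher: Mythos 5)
Your proposal follows essentially the same route as the paper's transfinite-recursion proof: realize developments as a set via Proposition \ref{prop:geroch}, apply Lemma \ref{le:ACDC} with $F$ the developments admitting no proper extension and $\mathcal S$ the strictly increasing chains (gluing along chains for limit stages, \eqref{eq:podev} plus separability for hypothesis (iii)), and then upgrade ``maximal'' to ``maximum'' through a maximal common sub-development of $M$ and an arbitrary $M'$, gluing and using the Hausdorffness argument of \cite{CBGer69}. Two points deserve correction or comment. First, in two places you infer ``separable, hence second countable'' (for the cardinality bound and, more importantly, to get the countable base $\{B_n\}$ in your verification of (iii)). For Hausdorff manifolds this implication is false in general (the Pr\"ufer surface is separable but not second countable), and Proposition \ref{prop:geroch} as stated only yields separability. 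The fix is immediate and is what the paper does: the sets $U_{\alpha+1}\setminus\overline{U_\alpha}$ are nonempty (by \eqref{eq:podev}), open and pairwise disjoint, and an uncountable family of pairwise disjoint nonempty open sets already contradicts separability; likewise the cardinality bound needs only separability together with first countability. Second, in the ``maximum'' step you import the existence of a largest common sub-development $M''$ from \cite{CBGer69}; that existence is exactly where \cite{CBGer69} invokes Zorn's lemma. This is perfectly legitimate for the theorem as stated, but it forfeits the point the paper is making: there one reapplies Lemma \ref{le:ACDC} (with $\mathcal S$ the strictly increasing chains bounded above by both $M_1$ and $M_2$, nonempty by Theorem \ref{thm:local}) so that the whole proof runs on ${\rm DC}_{\omega_1}$ rather than full Choice. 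Your closing sketch of the ``big steps'' variant via a real-valued size functional matches the paper's alternative proof; note that there too strict monotonicity is obtained from \eqref{eq:podev} and a countable dense family of initial velocities, not from second countability of the developments.
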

\noindent\emph{Preliminary considerations}
The uniqueness claim is a direct consequence of the definition of maximality  and the fact that $\preceq$ is a partial order on (isomrphism classes of) developments: if $M,\tilde M$ are developments satisfying $(1)$, then $\tilde M\preceq M$ and $M\preceq \tilde M$, proving that $\tilde M\cong M$.

For existence, let us briefly recall how the original proof in \cite{CBGer69} works:
\begin{itemize}
\item[{\sf A)}] One proves that there exists a development  $(M,g,\varphi)$  that is maximal in the sense, weaker than $(1)$, that:
\begin{equation}
\label{eq:altmax}
\text{if $(M',g',\varphi')$ is another development and $M\preceq M'$ then $M'\cong M$.}
\end{equation}
Here the authors of \cite{CBGer69} use Zorn's lemma: it is easy to see that any chain of developments admits a maximizer (just `glue' all the developments together to create one bigger than any of those in the chain), hence Zorn's lemma ensures the existence of the desired maximizer.
\item[{\sf B)}] One proves that given two developments $M_1,M_2$ there exists a development $M$  that is $\preceq M_1$ and $\preceq M_2$ and is maximal in the  sense that:
\[
\text{if $N$ is another development $\preceq M_1$, $\preceq M_2$ and $M\preceq N$, then $N\cong M$.}
\]
Even in this case, in \cite{CBGer69} the argument makes use of Zorn's lemma by observing that any chain of `common developments' admits a maximizer that is still a common development.
\item[{\sf C)}] One proves that if $M_1$ and $M_2$ are two developments and $M$ is a maximal common development in the sense of ${\sf (B)}$ above, then gluing $M_1$ and $M_2$ along $M$ produces a development, that obviously is $\succeq M_1$ and $\succeq M_2$. Here the difficult part, is in showing that the result of the gluing is Hausdorff. Very shortly and roughly said, if it were not there would $p_1\in M_1$ and $p_2\in M_2 $ that do no possess disjoint neighbourhoods   in the gluing. It is easy to see that this can occur only if they belong to respective boundaries of the glued regions, but then - with some work that relies on local uniqueness of solutions of Einstein's equations - one can show that the common development can be extended to incorporate also $p_1,p_2$, contradicting  maximality.
\item[{\sf D)}] The conclusion is now easy: we know that there is a development  $M$ that is  maximal in the sense of ${\sf (A)}$. Let $M'$ be another development, $N$ a maximal common development as in ${\sf (B)}$ and use ${\sf (C)}$ to glue $M$ and $M'$ along $N$. This new development is $\succeq M$ and thus, by maximality of $M$, must be isomorphic to $M$. It is easy to see that for this to happen we must have that $N$ is the whole $M'$, i.e.\ that $M'\preceq M$. In other words, $M$ is the desired development maximal in the sense $(1)$ of the statement.
\end{itemize}
Points ${\sf (C),(D)}$ in this argument do not require any Choice at all, so we won't give further  details and refer rather to the original source and to the comprehensive text \cite{Ring09}. We shall instead focus on how the `small steps' and `big steps' arguments can be used to tackle points ${\sf (A),(B)}$.

\begin{proofACDC} We start with proving point ${\sf (A)}$.  Let $A$ be the collection of all developments of $(\Sigma,h,\kappa)$. We avoid the usual set-theoretic issues related to the ``all'' above by asking  the underlying set on which the manifold structures are given to be a fixed set of cardinality of continuum (notice that Proposition \ref{prop:geroch}  ensures that this is the cardinality of any connected manifold equipped with a smooth metric tensor).  Let $F\subset A$ be the collection of all the developments satisfying \eqref{eq:altmax}  above. Let also $\mathcal S$ be the collection of $\alpha$-sequences $(M_\beta)_{\beta<\alpha}$ in $A$, $\alpha<\omega_1$, that are strictly increasing, meaning that for any $\beta_1<\beta_2<\alpha$ we have $M_{\beta_1}\preceq M_{\beta_2}$ and $M_{\beta_2}\not\preceq M_{\beta_1}$. We show that $A,F,\mathcal S$ satisfy the assumptions in Lemma \ref{le:ACDC}.

Theorem \ref{thm:local}  ensures that $\mathcal S$ is not empty, as it contains a 1-sequence consisting of one development. Property  $(ii)$ holds trivially. $(i)$ is obvious by definition if $\alpha$ is a successor ordinal. If instead is a limit ordinal, then $M_\alpha$ can be built via a gluing procedure as done in \cite{CBGer69}, that requires no choice. More precisely, in \cite{CBGer69} the following has been shown: if we are given a totally ordered collection of developments, then there exists a development bigger than, or isomorphic to, all of these. Such development is built by gluing the given ones along the maps $\psi$ coming with the relation $\preceq$. The existence of such development immediately grants that property $(i)$ holds also for $\alpha$ limit, Moreover, together with Proposition \ref{prop:geroch}, it also implies the upper bound $(iii)$. Indeed, given a strictly increasing $\alpha$-sequence $(M_{\beta})_{\beta<\alpha}$ for some ordinal $\alpha$, possibly $\geq\omega_1$, what just said allows to realize, via the maps $\psi$, the $M_\beta$'s as subsets of a bigger development $M$ and the sets $U_\beta:=M_{\beta+1}\setminus\{\text{closure of $M_\beta$ in $M$}\}$ are open, disjoint and not empty (recall \eqref{eq:podev}). Proposition \ref{prop:geroch} ensures that $M$ is separable, thus we can only have a countable collection of such $U_\beta$'s, i.e.\ $\alpha$ is countable (alternatively, without using \eqref{eq:podev} we can argue as in the footnote in the introduction to conclude that the increasing $\alpha$-sequence $(M_\beta)_{\beta<\alpha}$ of open subsets of $M$ must have countable cofinality, proving that $\alpha$ is countable).

It follows by Lemma \ref{le:ACDC}  that $F$ is not empty, i.e.\ that there is a development satisfying \eqref{eq:altmax}, as desired.

We turn to point ${\sf (B)}$. We want to show that a development as in \eqref{eq:altmax}  is maximal also in the sense of $(1)$ in the statement.  This follows   along the very same arguments just used: we let $A$ be the collection of developments, $F$ that of those satisfying the maximality property just stated and $\mathcal S$ the collection of strictly increasing $\alpha$-sequences in $A$ bounded from above by both $M_1$ and $M_2$. The non-emptiness of $\mathcal S$ follows by Theorem \ref{thm:local} and then that of $F$ by Lemma \ref{le:ACDC}. 
\end{proofACDC}
\begin{proofSTEPS} Let us define a way to quantify developments. Let $D\subset T\Sigma$ be countable and dense, let 
\[
D^T:=\{(t,v):\ t\in\Q,\ v\in D\text{ and }\sqrt{h(v,v)}<t\}
\]
and let $(t_n,v_n)_{n\in\N}$ be an enumeration of $D^T$. Also, let us fix an order isomorphism $\eta:[0,+\infty]\to[0,1]$.

Let $(M,g,\varphi)$ be a development of $(\Sigma,h,\kappa)$, recall that $\varphi$ induces, for any $p\in\Sigma$, an isomorphism $\Phi$ of $(\R\times T_p\Sigma, dt^2-h)$ and $(T_{\varphi(p)}M,g)$ sending $(1,0)$ to a future vector. For each $n\in\N$, $\Phi(t_n,v_n)$ is a future timelike vector in $M$: let  $\gamma_n=\gamma_n(M)$ be the maximal geodesics (intended as solution of the geodesic equation) in $M$ with $\gamma_n'(0)=\Phi(t_n,v_n)$. Then smoothness of $M$ grants that this is a good definition. Write $(-a_n,b_n)\subset\R$ with $a_n,b_n>0$ for the interval of definition of $\gamma_n$. Then define
\begin{equation}
\label{eq:quantMGHD}
F(M)=F(M,g,\varphi):=\sum_{n\in\N}\tfrac1{2^n}(\eta(b_n)+\eta(a_n))\ \in[0,4].
\end{equation}
By definition it is obvious that $F$ is monotone, i.e.\ that if $(M_1,g_1,\varphi_1)$ and $(M_2,g_2,\varphi_2)$ are two developments with $M_1\preceq M_2$ then $F(M_1)\leq F(M_2)$ (because the domain of $\gamma_n(M_1)$ is contained in that of $\gamma_n(M_2)$ for every $n\in\N$). We now observe that $F$ is strictly monotone, i.e.
\begin{equation}
\label{eq:Fstrict}
M_1\preceq M_2\text{ and }F(M_1)=F(M_2)\qquad\Rightarrow\qquad M_1\cong M_2.
\end{equation}
To see this we shall prove that if $M_2$ is a strict extension of $M_1$ (i.e.\ the canonical map $\psi:M_1\to M_2$ coming from $M_1\preceq M_2$ is not surjective), then $F(M_2)>F(M_1)$. By what already observed and the fact that $\eta$ is an isomorphism, to see this it suffices to find $n\in\N$ such that the interval of definition of $\gamma_n(M_2)$ strictly contains that of $\gamma_n(M_1)$.

To see this,  recall by \eqref{eq:podev} that $M_2\setminus\overline{\psi(M_1)}$ is not empty. Pick $p\in M_2\setminus\overline{\psi(M_1)}$ and then an inextendible timelike geodesic $\gamma $ passing through $p$. Since $\varphi_2(\Sigma)$ is a Cauchy hypersurface of $M_2$, such $\gamma$ must intersect $\varphi_2(\Sigma)$. Approximating the speed of $\gamma$ at the intersection point with elements of $D^T$, by continuity we find $n\in\N$ such that the image of $\gamma_n(M_2)$ intersects $M_2\setminus\overline{\psi(M_1)}$. Since $\psi\circ\gamma_n(M_1)$ coincides with the restriction of $\gamma_n(M_2)$ to the domain of definition of $\gamma_n(M_1)$, this proves that such domain of definition is strictly contained in that of $\gamma_n(M_2)$, as desired. 

Having established the strict monotonicity \eqref{eq:Fstrict} we proceed by proving points ${\sf (A),(B)}$.  We start with  ${\sf (A)}$. 
Given a development $(M,g,\varphi)$ we define $G(M)\in[0,4]$ as
\[
G(M):=\sup\big\{F(M')\ :\ (M',g',\varphi')\text{ is a development with }M\preceq M'\big\}
\]
and notice that $M_1\preceq M_2$ implies $G(M_1)\geq G(M_2)$. Now define recursively a sequence $(M_n)$ of developments as follows. Let $(M_0,g_0,\varphi_0)$ be an arbitrary development as given  by Theorem \ref{thm:local}. Having defined $(M_n,g_n,\varphi_n)$ pick $(M_{n+1},g_{n+1},\varphi_{n+1})$ so that $M_n\preceq M_{n+1}$ and $F(M_{n+1})\geq \tfrac12(F(M_n)+G(M_n))$. The definition of $G(M)$ ensures that such $M_{n+1}$ exists. Gluing all these developments (as in \cite{CBGer69}), we find a development $(M,g,\varphi)$ with $M_n\preceq M$ for every $n\in\N$. The construction ensures that $G(M_{n+1})-F(M_{n+1})\leq G(M_n)-\tfrac12(F(M_n)+G(M_n))\leq \tfrac12(G(M_n)-F(M_n))$ and thus $G(M_n)-F(M_n)\leq\tfrac1{2^n}(G(M_0)-F(M_0))\to 0$, so that the monotonicities of $F,G$ imply that $G(M)=F(M)$, which by \eqref{eq:Fstrict} means that $M$ is maximal in the sense of \eqref{eq:altmax}.

For  ${\sf (B)}$ we can argue along the lines just used by  quantifying the size of common developments using $F$. We omit the details.
\end{proofSTEPS}

\begin{remark}[The longest line]\label{re:longest}{\rm
The long line $L$ is the topological space  whose underlying set is $\omega_1\times[0,1)$ equipped with the order topology, where the (total) order is defined by
\[
(\alpha,t)\leq(\beta,s)\qquad\text{whenever}\qquad\text{either $\alpha<\beta$ or ($\alpha=\beta$ and $t\leq s$)}.
\]
It is quite easy to see that  $L\setminus\{(0,0)\}$ is locally homeomorphic to $\R$. A quick way to realize this is by recalling that any countable ordinal can be embedded in $\R$: from this fact it is immediate to see that for any point $(\bar\alpha,\bar t)\in L$ the set $\{(\alpha,t)\in L:(\alpha,t)\neq (0,0), \ (\alpha,t)\lneq (\bar\alpha,\bar t)\}$ is order isomorphic to some open interval in $\R$, and thus homeomorphic to such interval (as the topology on $L$ is the order topology). It is also clear that $L$ is not separable: if $((\alpha_n,t_n))_{n\in\N}$ is any given sequence in $L$, then for $\bar\alpha:=\sup_n\alpha_n+1$ the open ray of points $\gneq (\bar\alpha,0)$ does not contain any of these.

Notice that $L$ is the longest a line could be, i.e.\ that the following holds:
\begin{equation}
\label{eq:longest}
\begin{split}
&\text{Let $\X$ be a Hausdorff connected topological space locally homeomorphic to $\R$.}\\
&\text{Then there is a non-decreasing sequence $(U_\alpha)_{\alpha<\omega_1}$ of separable open subsets}\\
&\text{so that $\X=\cup_{\alpha<\omega_1}U_\alpha$.}
\end{split}
\end{equation}
Notice that this implies that an $\X$ as in the statement is homeomorphic to either $\R$ or to $L\setminus\{0\}$ or to two copies of $L$ glued in $0$.

To prove the above one line of thought is to notice that 
\begin{equation}
\label{eq:notice}
\text{$U$ connected, separable open subset of $\X$}\quad\Rightarrow\quad\text{$\bar U$ has the Lindel\"of property.}
\end{equation}
The claim is trivial, as $U$ must be homeomorphic to an open interval, thus $\bar U$ is homeomorphic to either an open, or half-open or closed interval, and the conclusion is true in either case.

Given \eqref{eq:notice}, the above claim \eqref{eq:longest} can be proved along the following lines. Let $U_0$ be an arbitrary non-empty, connected and separable open subset of $\X$. As mentioned, this is homeomorphic to an interval and thus has the Lindel\"of property.

We are going to recursively define $(U_\alpha)_{\alpha<\omega_1}$ as in \eqref{eq:longest} by also ensuring that each $U_\alpha$ has the Lindel\"of property. If $\alpha$ is a limit ordinal we put $U_\alpha:=\cup_{\beta<\alpha}U_\beta$ and notice that being a countable union of separable sets with the Lindel\"of property, $U_\alpha$ is separable and with the Lindel\"of property. On the other hand, given $U_\alpha$ we define $U_{\alpha+1}$ as follows. Each $p\in\overline  U_\alpha$ has a neighbourhood $V_p$ separable and with the Lindel\"of property (being homeomorphic to an interval). Also, by the Lindel\"of property ensured by \eqref{eq:notice} there are $(p_n)_{n\in\N}\subset \overline  U_\alpha$ such that $\overline  U_\alpha\subset\cup_nV_{p_n}$. We define $U_{\alpha+1}:=\cup_nV_{p_n}$ and notice that it is connected, separable and with the Lindel\"of property.

To conclude we need to prove that $\cup_{\alpha<\omega_1}U_\alpha=\X$ and since $\X$ is connected and such union is open , it suffices to prove that it is also closed. Let thus $p\in \overline{\cup_{\alpha<\omega_1}U_\alpha}$ and notice that since $\X$ is locally homeomorphic to $\R$ there is a sequence $(p_n)\subset \cup_{\alpha<\omega_1}U_\alpha$ converging to $p$. Let $\alpha_n<\omega_1$ be so that $p_n\in U_{\alpha_n}$ and let $\alpha:=\sup_n\alpha_n<\omega_1$. We then have $p\in \overline U_\alpha\subset U_{\alpha+1}$, as desired.

It is natural to wonder whether the same result as in \eqref{eq:longest} holds in higher dimensions. Contrary to what I initially thought, the answer is no. A relevant example here is the Pr\"ufer surface (see e.g.\ \cite[Appendix A]{SpivakI}): it  is  a 2-dimensional smooth (in fact analytic) connected manifold containing a family of mutually disjoint open sets $(U_a)$ parametrized by a real parameter $a\in\R$. If a result like \eqref{eq:longest} holds for such surface, then we could build a surjective map from $\omega_1$ to $\R$, proving the Continuum Hypothesis. Since this is independent from ZFC, we see that the higher dimensional analogue of \eqref{eq:longest} is a much more delicate result. 
}\fr\end{remark}

\appendix
\section{The first uncountable ordinal}

For convenience of the reader unfamiliar with the matter, we recall here few facts about ordinal numbers. The aim of these few pages is to convince those who feels a shiver down the spine in reading `transfinite recursion' or  `first uncountable ordinal' that ordinal numbers are innocent creatures and that  the first uncountable one $\omega_1$ is very concretely constructible (in fact, this is just the collection of all countable ordinals, much like the collection of all finite numbers is the `first infinite number'). In particular, we stress that existence of ordinal numbers has nothing to do with the Axiom of Choice (that instead is about existence of bijections from  arbitrary sets to ordinals) and that the constructibility of $\omega_1$ has nothing to do with the Continuum Hypothesis (that instead is about  the existence of a bijection between $\R$ and $\omega_1$).

Our presentation closely follows that in \cite{Jech2003}, to which we also refer for (very much) more on the matter. We emphasise  that in there  ordinal numbers are the first concept introduced  after the axioms of set theory. 

In order to avoid dealing with the (non-existing) \emph{set} of all ordinals, we shall just focus on countable ordinals and the first uncountable one: this will spare us from dealing with the set-theoretic issues related to `collections of objects too big to be sets' and is  sufficient for all the uses of ordinals made in this text\footnote{Not that there is any  issue in doing so: the collection of all ordinals is not a set, but rather a proper class. `Sets' and `proper classes' share the same properties with one key difference: proper classes are never elements of other objects. This solves Russell's paradox because the collection of all sets, or of all ordinals, is not a set but rather a proper class and as such it does not belong to itself (nor to anything else).

In other words, the barber in Russell's village is a woman.}.

\bigskip

Ordinal numbers are particular well-ordered sets, so to understand the former better to first discuss the latter. A \emph{well order} relation $\leq $ on a set $W$ is  a total order such that every non-empty subset has a minimum. As usual, we shall write $y<x$ to intend $y\leq x$ and $y\neq x$. Given a well order $(W,\leq)$ and $x\in W$, the \emph{initial segment} $W(x)\subset W$ is the set $\{y\in W:y< x\}$.
 A \emph{lower set} of a total order $W$ is a subset $L\subset W$ such that $x\in L$ and $y\leq x$ implies $y\in L$.
\begin{itemize}
\item[i)] If $W$ is a well order the map $x\mapsto W(x)$ is an order isomorphism from $W$ to the collection of proper lower subsets, ordered by inclusion.\\
\fs{\emph{Proof}\ The fact that the given map is an injective monotone map is obvious. For surjectivity, let $L\subset W$ be a proper lower set and $x$ the minimal element in $W\setminus L$. Then $L=W(x)$, obviously.\penalty-20\null\hfill$\square$
}
\item[ii)] If $W$ is a well order and $f:W\to W$ strictly  increasing, then $x\leq f(x)$ for every $x\in W$.
\fs{\emph{Proof}\ Assume $\{x\in W:f(x)<x\}$ to be not empty and let $x_0$ be its minimal element. Then $f(x_0)<x_0$ and thus $f(f(x_0))<f(x_0)$, contradicting the minimality of $x_0$.\penalty-20\null\hfill$\square$
}
\item[iii)] The only authomorphism of a well ordered set is the identity.\\
\fs{\emph{Proof}\ Apply {\rm (ii)} above to the authomorphism and its inverse.\penalty-20\null\hfill$\square$}
\item[iv)] If two well order are isomorphic, the isomorphism is unique.\\
\fs{\emph{Proof}\ Direct consequence of ${\rm (iii)}$ above.\penalty-20\null\hfill$\square$}
\item[v)] No well order is isomorphic to an initial segment of itself.\\
\fs{\emph{Proof}\ If the image of the isomorphism $f$ is $W(x)$, then $f(x)\in W(x)$ and thus $f(x)<x$.\penalty-20\null\hfill$\square$}
\item[vi)] If $W_1,W_2$ are two well orders, then one and only one of the following occurs:
\begin{itemize}
\item[a)] $W_1$ and $W_2$ are isomorphic,
\item[b)] $W_1$ is isomorphic to an initial segment of $W_2$,
\item[c)] $W_2$ is isomorphic to an initial segment of $W_1$.
\end{itemize}
\fs{\emph{Proof}  Define $\Gamma\subset W_1\times W_2$ as
\[
\Gamma:=\{(x,y)\in W_1\times W_2: W_1(x)\text{ is isomorphic to }W_2(y)\}
\]
and let $U_1,U_2$ be the images of the projections of $\Gamma$ in $W_1,W_2$ respectively. Using the points above it is clear that $\Gamma$ is (the graph of) a bijective order isomorphism $f$ from $U_1$ to $U_2$. Suppose that $U_1,U_2$ are both proper subsets of $W_1,W_2$ respectively and let $x\in W_1$ be the least element of $W_1\setminus U_1$ and $y\in W_2$ the least element in $W_2\setminus U_2$. Then the restriction of $f$ to $W_1(x)$ is an order isomorphism which, by the points above, has image precisely $W_2(y)$. In other words we have $(x,y)\in\Gamma$, contradicting the definitions of $x,y$. Thus either $U_1=W_1$ or $U_2=W_2$ or both are true, giving the claim.\penalty-20\null\hfill$\square$} 
\end{itemize}
We now define countable ordinals (i.e.\ ordinals whose underlying set is at most countable) as   representatives of isomorphism classes of well orders. This is in principle in contrast with our initial claim that ordinals are themselves well ordered sets, but this discrepancy (that only occurs because of the way  we chose to construct ordinals - see Remark \ref{re:sdeford}) will soon disappear, see point (vii) below.

Fix an infinite countable set $C$, e.g.\ $C=\N$, consider the set $\mathcal P(C\times C)$ of subsets of $C\times C$ and notice that, by definition, these are all the relations on $C$. Let $\WO(C )\subset \mathcal P(C\times C)$ be the collection of those relations that are well orders on either the whole $C$ or some subset of it (e.g.\ we see the singleton $\{(x,x)\}$ for $x\in C$ as the well order relation on the singleton $\{x\}$, rather than as a relation on the whole $C$).  We quotient $\WO(C)$  by the natural isomorphism relation $\sim$ of well orders and \emph{define} countable ordinals as $\sim$-equivalence classes, i.e.\ as elements of $\WO(C )/\sim$. For $\alpha,\beta\in \WO(C)$ we say that $\alpha\leq \beta$ whenever either $\alpha=\beta$ or some (and thus every) element in the equivalence class $\alpha$ is isomorphic to an initial segment of some  (and thus every) element in the equivalence class  $\beta$. We also put $\omega_1:=\WO(C )/\sim$ and observe  that by item (vi) above $\leq$ is a total order on  $\omega_1$. For $\alpha\in\omega_1$ we put $\hat\alpha:=\{\beta\in\omega_1:\beta<\alpha\}$.

Notice that for any natural number $n$, up to isomorphism there is exactly one total order of the set of $n$ elements and such order is a well order: with a standard abuse of notation we shall denote by $n$ the corresponding equivalence class, i.e.\ the associated ordinal. Notice that if $n<m$ as natural numbers, then rather trivially the well order of $n$ elements is isomorphic to an initial segment of the well order with $m$ elements, so that the standard ordering on natural numbers coincides with that of the corresponding ordinal.

We have:
\begin{itemize}
\item[vii)] For $\alpha\in\omega_1$ the set $\hat\alpha$ equipped with the order it inherits from $\omega_1$   is isomorphic to any well order in the equivalence class $\alpha$. In particular $\hat\alpha$ is well ordered.\\
\fs{\emph{Proof}\ Direct consequence of the definitions and of ${\rm (i)}$.
\penalty-20\null\hfill$\square$
}
\item[viii)] $(\omega_1,\leq)$ is a well order.\\
\fs{\emph{Proof}\ Let $A\subset\omega_1$ be not empty and $\alpha\in A$. Then either  $\alpha$ is the minimum of $A$ or $\{\beta\in A:\beta< \alpha\}\neq\emptyset$  and, being it contained in the well ordered set $\hat\alpha$,  has minimum.
\penalty-20\null\hfill$\square$
}
\item[ix)] $\omega_1$ is not countable.\\
\fs{\emph{Proof}\ As $\WO(C)$ is the collection of all countable well orders, if $\omega_1$  were countable it would be isomorphic to some element of  $\WO(C)$ and thus, by (vii), to an initial segment of itself.
\penalty-20\null\hfill$\square$
}
\item[x)] If $W$ is an uncountable well order, then $\omega_1$ is either isomorphic to $W$ or to an initial segment of it.\\
\fs{\emph{Proof}\ By ${\rm (vi)}$ it suffices to exclude that $W$ is isomorphic to an initial segment of $\omega_1$. But this is obvious, as by construction and ${\rm (vii)}$ all initial segments of $\omega_1$ are countable.
\penalty-20\null\hfill$\square$
}
\item[xi)] A subset of $\omega_1$ has supremum in $\omega_1$ if and only if it is   countable.\\
\fs{\emph{Proof}\ Let $(\alpha_n)\subset \omega_1$ be countable. By (vii) for every $n$ the set $\hat\alpha_n$  is countable, hence so is $\cup_n\alpha_n$\footnote{proving that countable union of countable sets is countable relies on the Axiom of Countable Choice,  that is weaker than Countable Dependent Choice that I am assuming throughout the text.}. It follows that there is  $\alpha\in\omega_1$ such that $\beta< \alpha$ for every $\beta\in\cup_n\hat\alpha_n$. Therefore  $\alpha\geq\alpha_n$ for every $n$ (for if not $\alpha< \alpha_n$ for some $n$, implying $\alpha\in\cup_n\hat\alpha_n$ and thus $\alpha<\alpha$), i.e.\ $\alpha$ is an upper bound for the original sequence. Thus the set of upper bounds is not empty: its minimum is the desired supremum in $\omega_1$. 
Conversely,  any bounded set  $A\subset\omega_1$ is, by definition, contained in  a set of the form $\{\beta\in\omega_1:\beta\leq \alpha\}$ for some $\alpha\in\omega_1$, which is clearly countable.
\penalty-20\null\hfill$\square$
}
\end{itemize}
We have not defined general ordinals, but ${\rm (viii),(ix),(x)}$  above should make it clear why it is fair, even for us, to say that $\omega_1$ is the least uncountable ordinal (in our case this is  interpreted as $\omega_1$ being the least uncountable well order). We can now state a result often used in this manuscript. In the literature it is usually formulated under the additional assumption that the map is continuous or as the non-existence of an injective monotone map from $\omega_1$ to $\R$.
\begin{proposition}\label{prop:omega1r}
Any monotone map from $\omega_1$ to $\R$ is eventually constant (and bounded). 
\end{proposition}
\begin{proof}
Let $f:\omega_1\to\R$ be monotone and $S:=\sup_{\alpha\in\omega_1}f(\alpha)\in\R\cup\{+\infty\}$. Then  there is a sequence $(\alpha_n)\subset\omega_1$ such that $S=\sup_nf(\alpha_n)$. Hence  $\alpha:=\sup_n\alpha_n$ is a countable ordinal (recall ${\rm (xi)}$ above) and by monotonicity $f(\alpha)\geq f(\alpha_n)$ for every $n$, i.e.\ $f(\alpha)\geq S$. Then for every $\alpha\leq\beta<\omega_1$ we have $S\leq f(\alpha)\leq f(\beta)\leq S$, the last inequality being a consequence of the definition of $S$, proving that $\R\ni f(\beta)=S$.
\end{proof}
Although not necessary for our discussion, we point out that  the order relation of $\R$ (in fact also of $\Q$) is rich enough to allow embeddings of any countable ordinal:
\begin{itemize}
\item[xii)] Let $\alpha\in\omega_1$. Then there is  an order isomorphism  $f:\hat\alpha\to\R$  of $\hat\alpha$ with its image in $\R$.\\
\fs{\emph{Proof}\  Let $(\beta_n)$ be an enumeration of $\hat\alpha$.  Define $f:\hat\alpha\to \Q$ inductively as follows. Set $f(\beta_0):=0$ and, assuming to have defined $f(\beta_0),\ldots, f(\beta_n)$, define $f(\beta_{n+1})$ so that it has the same order relations with the previously defined values as $\beta_{n+1}$ has with $\beta_0,\ldots,\beta_n$. The fact that this is always possible is trivial.

Alternatively, we can proceed by transfinite induction (see  below) noticing that:
\begin{itemize}
\item[i)] $0$, i.e.\ the empty set, can be embedded into $\R$
\item[ii)] If $\alpha$ can be embedded, so can $\alpha+1$. To see this, fix an order isomorphism of $\R$ and $(0,1)$, use it to embed $\alpha$ in $(0,1)$ and then send the extra element to 1.
\item[iii)] If $n\mapsto\alpha_n$ is an increasing sequence of ordinals embeddable on $\R$, then so is $\alpha:=\sup_n\alpha_n$. To see this, notice that if $\alpha_1<\alpha_2$ are both embeddable and we have fixed an embedding of $\alpha_1$ in $(0,1)$, then we can - easily - embed $\alpha_2$ in $(0,2)$ in such a way that this embedding extends the one previously given for $\alpha_1$. Iterating this procedure we see that we can find embeddings $f_n:\hat\alpha_n\to(0,n)$ such that $f_n\restr{\hat\alpha_{n-1}}=f_{n-1}$. Then for every $\beta<\alpha$ and every $n,m\in\N$ big enough so that $\beta<\alpha_n$ and $\beta<\alpha_m$ we have $f_n(\beta)=f_m(\beta)$. Therefore the map $f:\hat\alpha \to \R$ defined as $f(\beta):=f_n(\beta)$ where $n=n(\beta)$ is big enough so that $\beta<\alpha_n$ is well defined and the desired embedding.
\end{itemize}
Since every limit countable ordinal is, being countable, the supremum of an increasing sequence, this suffices to conclude.
\penalty-20\null\hfill$\square$
}
\end{itemize}

We  now discuss \emph{transfinite induction} and \emph{transfinite recursion}. We start with induction:
\begin{itemize}
\item[xiii)] Let $A\subset\omega_1$ be so that  for every $\alpha\in\omega_1$ we have ``$\hat\alpha\subset A$ implies $\alpha\in A$''. Then $A=\omega_1$.\\
\fs{Proof. If $\omega_1\setminus A$ is not empty, there must be a least element $\alpha$. For such $\alpha$ we have $\hat\alpha \subset A$ and $\alpha\notin A$, contradicting the assumption.\penalty-20\null\hfill$\square$
}
\end{itemize}
Notice that formulating the (transfinite) induction assumption as above frees us from checking the 0 case: we surely have $\emptyset\subset A$ and thus by assumption we must have $0\in A$. Thus $\{0\}\subset A$, implying $1\in A$ `and so on' in the sense made rigorous by the proof above. 

Still, in applications the induction assumption is often verified by distinguishing which kind of ordinal $\alpha$ is; this also clarifies the relation between transfinite and classical induction. We say that a (countable) ordinal is a \emph{successor ordinal} if there is $\beta<\alpha$  so that $\alpha$ is the least ordinal bigger than $\beta$. In this case we write $\alpha=\beta+1$ (a terminology also justified by how one defines addition on ordinals, something we will not discuss). Ordinals that are not successor ordinals are called \emph{limit ordinals}. Notice that:
\begin{itemize}
\item[xiv)] $\alpha$ is a limit ordinal if and only if it is the supremum of the set $\hat\alpha\subset\omega_1 $ of ordinals $\beta<\alpha$.\\
\fs{\emph{Proof}\ Clearly if $\alpha=\beta+1$ then $\beta$ is the supremum of $\hat \alpha$. Conversely, if $\beta:=\sup\hat\alpha<\alpha$, then there is no ordinal $\gamma$ with $\beta<\gamma<\alpha$, hence $\alpha=\beta+1$.
\penalty-20\null\hfill$\square$}
\end{itemize}
In this terminology 0 is a limit ordinal, but due to its peculiarity it is often treated differently from non-zero limit ones. Then arguments by transfinite induction are often phrased as:
\begin{itemize}
\item[xv)] Let $A\subset \omega_1$ be such that $0\in A$, ``if $\alpha\in A$ then $\alpha+1\in A$'' and ``if $\alpha$ is a limit ordinal such that $\beta\in A$ for every $\beta<\alpha$ then $\alpha\in A$''. Then $A=\omega_1$.\\
\fs{\emph{Proof}.\ Obvious from the previous formulation and the discussion just had.\penalty-20\null\hfill$\square$
}
\end{itemize}
We now turn to \emph{transfinite recursion}, something that is typically used to construct objects.  Recall that given a set $A$, by $A^{<\omega_1}$ it is meant the set of functions from countable ordinals to $A$. 
\begin{itemize}
\item[xvi)] Let $A$ be a  set  and ${\sf Next}: A^{<\omega_1}\to A$ a given map. Then there exists a unique map $f:\omega_1\to A$ such that:
\begin{equation}
\label{eq:trrec}
f(\alpha)={\sf Next}(f\restr{\hat\alpha})\qquad\forall\alpha\in\omega_1.
\end{equation}
\fs{\emph{Proof}.\  Let $B\subset \omega_1$ be the set of all $\beta\in\omega_1$ such that: there exists a unique map $f:\{\alpha\leq\beta\}\to A$ such that \eqref{eq:trrec} holds for all $\alpha\leq \beta$. Let $\gamma\in\omega_1$ be such that  $\hat\gamma\subset B$: in this case $f$ is already uniquely defined on $\hat\gamma$ and the constraint/definition \eqref{eq:trrec} forces the value of $f(\gamma)$, proving that $\gamma\in B$. We thus proved that $\hat\gamma\subset  B$ implies $\gamma\in B$, hence by transfinite induction we see that $B=\omega_1$, as desired.
\penalty-20\null\hfill$\square$}
\end{itemize}
The idea behind the statement of the transfinite recursion theorem is that the map ${\sf Next}$ tells us what is the `next value' the function $f$ should take, provided the previous ones have already been assigned. In practical situations, such map might be defined distinguishing the successor/limit ordinal cases.  Notice the difference with the Axiom ${\sf DC}_{\omega_1}$: here no choice is required, because ${\sf Next}$ is a function.

\begin{remark}[The actual definition of ordinals]\label{re:sdeford}{\rm There is something clumsy about our presentation of (countable) ordinals, as we started with an arbitrary set, considered well orders in it to define ordinals and later in (vii) and (viii) realized that the set of (countable) ordinals is well ordered and that the initial segment corresponding to any given $\alpha$ in this set   is isomorphic to the well orders in the equivalence class  $\alpha$. 

Given this, one could question what is the role of the initial set and whether it is possible to use the order relation between ordinals to define ordinals themselves by declaring that
\begin{equation}
\label{eq:orddef}
\text{Each ordinal is the set of ordinals smaller than it.}
\end{equation}
This, of course, is the celebrated informal definition of ordinals as given by Zermelo and von Neumann. Intuitively, such definition works as follows. The empty set surely is a set or ordinals (none of its elements is not an ordinal, after all), thus according to the above the empty set is an ordinal. We denote it by 0 and notice that, again by the above definition, 0 is bigger than no ordinal. Now that we have 0, we can certainly consider the set $\{0\}$, which is also an ordinal: this is bigger than the sole ordinal 0 and shall be denoted 1. Then $2:=\{0,1\}$, $3:=\{0,1,2\}$,\ldots, $\omega_0:=\{0,1,2,\ldots\}$. Notice that as a set $\omega_0$  is precisely $\N$ and has cardinality $\aleph_0$, but in the theory of ordinals it is better to use a different letter, as there are other countable ordinals such as $\omega_0+1:=\{0,1,2,\ldots,\omega_0\}$. In fact, as discussed above, there are uncountably many countable ordinals.

The actual definition of ordinal, as given e.g.\ in \cite[Def. 2.10]{Jech2003}, makes rigorous   \eqref{eq:orddef} above as follows:
\begin{equation}
\label{eq:orddef2}
\text{An ordinal is a set that is transitive and well ordered by $\in$.}
\end{equation}
Here `transitive' for a set $A$ means that each element of $A$ is also a subset of $A$ and `well ordered by $\in$' means that the relation $\leq$ in $A$ defined as $B\leq C$
 if either $B=C$ or $B\in C$ is a well order. We refer to \cite{Jech2003} for more on the matter.
}\fr\end{remark}

%

\def\cprime{$'$} \def\cprime{$'$}

\end{document}